\newtheorem{thm}{Theorem}[section]
\newtheorem{pro}[thm]{Proposition}
\newtheorem{lem}[thm]{Lemma}
\newtheorem{cor}[thm]{Corollary}
\theoremstyle{definition}
\newtheorem{defn}[thm]{Definition}
\newtheorem{exa}[thm]{Example}
\newtheorem*{ackn}{Acknowledgements}
\theoremstyle{remark}
\newtheorem{rmk}[thm]{Remark}
\newtheorem{que}[thm]{Question}
\numberwithin{equation}{section}
\def\N{\mathbb{N}}
\def\J{\mathscr{J}}
\def\D{\mathscr{D}}
\def\R{\mathscr{R}}
\def\L{\mathscr{L}}
\def\H{\mathscr{H}}
\def\cK{\mathcal{K}}
\def\La{\Leftarrow}
\def\Ra{\Rightarrow}
\def\Lra{\Leftrightarrow}
\def\ol#1{\overline{#1}}
\def\gen#1{\langle{#1}\rangle}
\def\pre#1#2{\langle{#1}\,|\,{#2}\rangle}
\def\Sch{Sch\"u\-tzen\-ber\-ger }
\def\gem{group-em\-bed\-d\-a\-ble }
\DeclareMathOperator\Mon{Mon} \DeclareMathOperator\Inv{Inv} \DeclareMathOperator\Gp{Gp} 
\DeclareMathOperator\pref{pref}  \DeclareMathOperator\red{red}
  \DeclareMathOperator\Stab{Stab}
\DeclareMathOperator\MonRC{MonRC}
\begin{document}


\title[Prefix monoids and right units]%
{Prefix monoids of groups and right units of special inverse monoids} 


\subjclass[2020]{20M05; 20F05, 20M18}


\keywords{Prefix monoid; Right units; Special inverse monoid; Higman Embedding Theorem; \Sch group}

\maketitle

\begin{center}
IGOR DOLINKA
\footnote{
Department of Mathematics and Informatics, University of Novi Sad, Trg Dositeja Obra\-do\-vi\-\'ca 4,
21101 Novi Sad, Serbia.
\\ \emph{Email address:} \texttt{dockie@dmi.uns.ac.rs}.}
\ and \ 
ROBERT D. GRAY\footnote{School of Mathematics, University of East Anglia, Norwich NR4 7TJ, England.
\\ \emph{Email address:} \texttt{Robert.D.Gray@uea.ac.uk}.} 
\end{center}


\begin{abstract}
A prefix monoid is a finitely generated submonoid of a finitely presented group generated by the prefixes of its defining relators.  
Important results of Guba (1997), and of Ivanov, Margolis and Meakin (2001), show how the word problem for certain one-relator monoids, and inverse monoids, 
can be reduced to solving the membership problem in prefix monoids of certain one-relator groups. Motivated by this, 
in this paper we study the class of prefix monoids of finitely presented groups. We obtain a complete description of this class of monoids.  
All monoids in this family are finitely generated, recursively presented and group-embeddable.  Our results show that not every 
finitely generated recursively presented group-embeddable monoid is a prefix monoid, but for every such monoid if we take a free product 
with a suitably chosen free monoid of finite rank, then we do obtain a prefix monoid. Conversely we prove that every prefix monoid arises in this way. 
Also, we show that the groups that arise as groups of units of prefix monoids are precisely the finitely generated recursively presented groups,
while the groups that arise as \Sch groups of prefix monoids are exactly the recursively enumerable subgroups of finitely presented groups. 
We obtain an analogous result classifying the \Sch groups of monoids of right units of special inverse monoids.  
We also give some examples of right cancellative monoids arising as monoids of right units of finitely presented special inverse monoids, 
and show that not all right cancellative recursively presented monoids belong to this class.  
\end{abstract}




\section{Introduction}

The main themes of the present paper draw inspiration from the beautiful algebraic theory surrounding one of the longest standing and most important open problems  in combinatorial algebra and semigroup theory: the word problem for one-relator monoids. The classical result of Magnus \cite{Ma2} that all one-relator groups have algorithmically decidable word problems, as well as the method \cite{Ma1} it entails, is one of the  cornerstones of combinatorial group theory \cite{LSch}. On the other hand, the question of decidability the word problem for one-relator monoids has been a subject of a series of serious attacks, most notably by Adian \cite{Adj} and his students, among others. We refer to the survey \cite{CF} for a thorough historical account of this fascinating topic.

There have been notable partial successes for the word problem of one-relator monoids, perhaps the most glaring being the case of \emph{special} one-relator monoids, those of the form  $\Mon\pre{A}{w=1}$, where the word problem is always decidable. This result was proved by Adjan \cite{Adj} who also showed that these one-relator monoids have several other good algebraic properties. For example, he proved that the group of units of a one-relator special monoid $M$ is a one-relator group. Later, Makanin \cite{Mak} generalised this by showing that for any special monoid $M$, that is,  one given by defining relations of the form $w_i=1$, $i\in I$, the corresponding group of units can be defined by no more relations than in the original presentation  for $M$. Furthermore, all maximal subgroups of a special monoid $M$ must be isomorphic to its group of units \cite{Malh}, while the submonoids of \emph{right units} (right invertible elements) always turn out to be free products of the group of units and a free monoid (of finite rank, in the case when $M$ is finitely presented). See also the related work of Lallement \cite{Lal} and Zhang \cite{Zh1,Zh2}. 

Today we know, due to Adian and Oganessian \cite{AO}, that the general one-relator problem for monoids reduces to the cases of the form $\Mon\pre{a,b}{aub=ava}$  and $\Mon\pre{a,b}{aub=a}$, where $u,v\in \{a,b\}^*$ are arbitrary words. All monoids in these reduced cases are right cancellative, and this spurred  Ivanov, Margolis and Meakin \cite{IMM} to introduce yet another type of algebraic structure into the discourse: \emph{inverse monoids}, more precisely  \emph{special} inverse monoids, ones given by presentations of the form $\Inv\pre{A}{w_i=1\; (i\in I)}$. By embedding a right cancellative one-relator monoid into a one-relator special inverse monoid, they proved that the word problem for (ordinary) one-relator monoids reduces to the word problem for one-relator special inverse monoids. However, surprisingly, the second-named author of this paper has recently shown in \cite{Gr-Inv} that there \emph{exist} one-relator special inverse monoids in which the  word problem is undecidable. Still, this does not invalidate the approach suggested in \cite{IMM}, as the reduction presented there always produces a reduced---albeit not cyclically reduced---relator word, while the relator words constructed in \cite{Gr-Inv} are necessarily non-reduced.

The connection between prefix monoids of groups and the submonoid of right units (i.e. right invertible elements) of special inverse monoids  comes from the fact that the natural surjective homomorphism from a special inverse monoid $T=\Inv\pre{A}{w_i=1 \; (i \in I)}$ to its maximal group image $G=\Gp\pre{A}{w_i=1 \; (i \in I)}$ maps the submonoid $R$ of right units of $T$ surjectively to the \emph{prefix monoid} of $G$, with respect to this presentation. Here the prefix monoid $P$ is the submonoid of $G$ generated by all elements represented by the prefixes of the relator words $w_i \; (i \in I)$. In general this homomorphism from $R$ onto $P$ is not injective, but it is in the case that $T$ is an \emph{$E$-unitary} inverse monoid.  There are many equivalent definitions, but for our purposes it is the best to express it in the following way: $T=\Inv\pre{A}{w_i=1 \; (i \in I)}$ is $E$-unitary if in the natural homomorphism from $T$ to its greatest  group image $G=\Gp\pre{A}{w_i=1 \; (i \in I)}$ the only pre-images of the identity element of $G$ are the idempotents of $T$. Now, yet another major result of \cite{IMM}  shows that when $T=\Inv\pre{A}{w_i=1 \; (i \in I)}$ is $E$-unitary then, provided $\Gp\pre{A}{w_i=1 \; (i \in I)}$ has decidable word problem, the decidability of the word problem of $T$ reduces to the membership problem for the prefix monoid of $G$. In the same paper they also prove that when $w$ is a cyclically reduced word then $\Inv\pre{A}{w=1}$ is $E$-unitary and thus the word problem reduces to the prefix membership problem for the corresponding one-relator group in this case.   
The prefix membership problem for one-relator  groups have been studied by several authors, e.g.\ in \cite{Juh,MMSu}, and more recently by the present authors in \cite{DG}. The close connections between prefix monoids of groups and right units of special inverse monoids described above mean that it is natural to investigate both classes in parallel. 

Further motivation for the study of the right units of special inverse monoids comes from the fact that the main undecidability results \cite{Gr-Inv} are proved by constructing special inverse monoids in which membership in the submonoid of right units is undecidable. 
Also, the relevance of the study of prefix monoids is highlighted by important work of Guba \cite{Guba}. As mentioned above the word problem remains open for monoids of the form $\Mon\pre{a,b}{a=aub}$. 
It follows from the results of Guba \cite{Guba} that the word problem for one-relator monoids of the form $\Mon\pre{a,b}{a=aub}$, $u\in\{a,b\}^*$, reduces to the prefix membership problem in one-relator groups of the form $\Gp\pre{x,y,C}{xUyx^{-1}=1}$ where $x \neq y$, $C$ is a finite alphabet, and $U$ is a positive word over $\{x,y\} \cup C$. The prefix membership problem for this class of one-relator group presentations remains open.      

For brevity we shall use the expression \emph{prefix monoid} to always mean a monoid isomorphic to the prefix monoid of some finitely presented group, and by an \emph{RU-monoid} we mean one that is isomorphic to the submonoid of right units of some finitely presented special inverse monoid.   
In this paper we are shall consider the following four questions:
\begin{itemize}
\item[(1)] Which monoids are prefix monoids?
\item[(2)] Which monoids are RU-monoids?
\item[(3)] What can the groups of units of these monoids be?
\item[(4)] What can the \Sch groups of these monoids be?
\end{itemize}
For prefix monoids we shall obtain complete answers to all these questions, that is, we answer (1), (3) and (4) in this case. By the very definition, prefix monoids are all finitely generated, recursively presented and group-embeddable. Thus it is natural to wonder whether the answer to (1) could be exactly the class of finitely generated, recursively presented and group-embeddable monoids. It turns out this is not the case. Our results will show that not every finitely generated recursively presented group-embeddable monoid is a prefix monoid, but for every such monoid if we take a free product with a suitably chosen free monoid of finite rank, then we do obtain a prefix monoid. Conversely we prove that every prefix monoid arises in this way, thus answering question (1); see Theorem~\ref{thm:almost-all}. Concerning the subgroups of prefix monoids, since any \gem (and, more generally, any right cancellative) monoid  can have only a single idempotent, namely the identity element, it follows that the only maximal subgroup this the prefix monoid has is its group of units.  We are going to answer (3) for prefix monoids by showing (in Theorem~\ref{thm:units})  that the class of groups of units of prefix monoids is precisely the class of finitely generated recursively presented groups (which, by the Higman Embedding Theorem \cite{Hig}, is precisely  the class of all finitely generated subgroups of finitely presented groups). Lacking  any other subgroups, semigroup theory provides us with ``hidden'' group structures within arbitrary monoids, the so-called \emph{\Sch groups} (defined in a precise fashion  in the next section), which are generalisations of the notion of maximal subgroups to all (including non-regular) $\D$-classes of a semigroup. We shall answer question (4) for prefix monoid in Theorem~\ref{thm:Sch-prefix} where we show that the class of \Sch groups of all prefix monoids is precisely the class of recursively enumerable subgroups of finitely presented groups. 

The structure of RU-monoids is more complex than that of prefix monoids. RU-monoids were studied in \cite{GR} where it was shown that the RU-monoid of a one-relator special inverse monoid need not be finitely presented, and need not decompose as a free product of the groups of units and a free monoid (as they do in the case of non-inverse special monoids). By definition all RU-monoids are right cancellative and recursively presented. We shall see that not every right cancellative recursively presented monoid is an RU-monoid. Conversely we shall identify a large class of monoids that do all arise as RU monoids, called \emph{finitely RC-presented monoids}.  While the problem of completely describing RU-monoids remains open, we do succeed in classifying their \Sch groups.  For the groups of units of RU-monoids this question was answered in the recent paper \cite[Theorem 4.1]{GK} where the groups of units of finitely presented inverse monoids are shown to be precisely the finitely generated recursively presented groups.  Since the group of units of the right units $U$ of a special inverse monoid $M$ is equal to the group of units of the submonoid of right units $R$ of $M$, it follows from \cite[Theorem 4.1]{GK} that the groups of units of RU-monoids are exactly the finitely generated recursively presented groups. So that result answers question (3) for RU-monoids. In our final main result (Theorem~\ref{thm:Sch-ru}) we answer question (4) for RU-monoids showing that the class of \Sch groups of all prefix monoids is precisely the class of recursively enumerable subgroups of finitely presented groups. Hence while prefix monoids and RU-monoids are in general wildly different classes of monoids, on the level of groups of units and \Sch groups the possible behaviours coincide.  

The remainder of the paper consists of four sections beyond this introduction: one of preliminary nature where we accumulate the necessary notions and prerequisites, followed by two sections concerned with characterisations of prefix monoids and their associated groups, respectively, and the final section dealing with RU-monoids.


\section{Preliminaries}

\subsection{Presentations, prefix monoids, RU-monoids}

In the course of working with monoids, inverse monoids and groups, and presentations thereof, we shall be concerned with alphabets representing their generating sets 
(finite or infinite), such as $A$ in the monoid case and ``doubled alphabets'' $\ol{A}=A\cup A^{-1}$ in the inverse monoid and group case. To represent elements of 
these structures, we use words over these alphabets. The \emph{free monoid} $A^*$ consists of all words (finite sequences of letters) over $A$. The \emph{free group} 
$FG(A)$ on $A$ is defined on a subset of $\ol{A}^*$ whose elements are called \emph{reduced words}: these are words that do not contain subwords of the form $aa^{-1}$ 
and $a^{-1}a$ for any $a\in A$. Every word $w\in\ol{A}^*$ has its \emph{reduced form} $\red(w)$ obtained by successively removing subwords of the indicated form; 
the reduced form of a word is unique as this process can be shown to be confluent. So, in $FG(A)$, $u\cdot v=\red(uv)$. 
For any letter $a \in A$ we define $(a^{-1})^{-1}=a$ and then for any word $a_1 \ldots a_k \in \ol{A}^*$  we define $(a_1 \ldots a_k)^{-1} = a_k^{-1} \ldots a_1^{-1}$. With this notation if $w$ is a reduced word then $w^{-1}$ is the unique reduced word representing the inverse of $w$ in the free group.   
Finally, the \emph{free inverse monoid} $FIM(A)$ is obtained as the quotient of the free monoid $\ol{A}^*$ 
by the so-called \emph{Wagner congruence}, generated by the pairs $(uu^{-1}u,u)$ and $(uu^{-1}vv^{-1},vv^{-1}uu^{-1})$ for all 
$u,v\in \ol{A}^*$. An elegant geometric description of $FIM(A)$, with elements represented as finite connected birooted subgraphs of the Cayley graph of $FG(A)$, 
was given by Munn \cite{Munn} (see also Scheiblich \cite{Sch}), thus explaining the term \emph{Munn trees} used for such graphs.

Combinatorial algebra studies algebraic structures by representing then using \emph{presentations}: as quotients of corresponding free structures by certain congruences whose generating pairs are written as ``defining relations'' (or \emph{relators}). So, for a monoid we write $M=\Mon\pre{A}{u_i=v_i\; (i\in I)}$  if $M\cong A^*/\theta$ where $\theta$ is the congruence of the free monoid $A^*$ generated by the set of pairs of words $\{(u_i,v_i):\ i\in I\}$. Analogously $T=\Inv\pre{A}{u_i=v_i\; (i\in I)}$ is defined to be the quotient of the free inverse monoid $FIM(A)$ by the congruence on $FIM(A)$ generated by the set of pairs of words $\{(u_i,v_i):\ i\in I\}$. Finally, for  a group $G$ we write $G=\Gp\pre{A}{w_i=1\; (i\in I)}$ if $G\cong FG(A)/N$ where $N$ is a normal subgroup of $FG(A)$ generated (as a normal subgroup) by the words  $w_i$, $i\in I$ (in fact, to be precise, by the words $\red(w_i)$ as some of the $w_i$ might not be reduced). By a finitely presented monoid, inverse monoid, or group, we mean one that admits a presentation with finitely many generators and finitely many defining relations. It is easy to prove that a group is finitely presented as a group if and only if it is finitely presented as a monoid if and only if it is finitely presented as an inverse monoid. Interestingly, the same is not true for monoids and inverse monoids since even free inverse monoids are not finitely presented as monoids; see \cite{Schein1975}.

In general, for the element represented by a word $w$ (over the suitable alphabet) in the structure $S$ given by the presentation $\pre{A}{\mathfrak{R}}$ (be it  a monoid, an inverse monoid, or a group) we write $[w]_S$.

For further background in combinatorial group theory (such as free products and HNN extensions) we refer to \cite{LSch}.

\begin{defn}
Let $G$ be a finitely presented group. We say that $M$ is a \emph{prefix monoid in $G$} if there exists a finite presentation of $G$,
$$
G = \Gp\pre{A}{w_i=1\; (i\in I)},
$$
such that $M$ is isomorphic to the submonoid of $G$ generated by all elements $[p]_G$ such that the word $p$ is a prefix of $w_i$ for some $i\in I$. We simply say 
that $M$ is a \emph{prefix monoid} if it is a prefix monoid in some finitely presented group. Notice that a group might have a number of prefix monoids, as this 
definition depends on the presentation, not just the group it defines; it might happen 
(see Example~\ref{Ex:New})
that by changing a presentation of a given group we arrive at a different 
prefix monoid. 
For a fixed group presentation we talk about \emph{the} prefix monoid of the group with respect to that presentation.  
\end{defn}

\begin{rmk}
It is important to note that, since we are often concerned with considering group presentations and inverse monoid presentations given by the same generators and relators,  
here we do not assume that the relator words appearing in group presentations are cyclically reduced or even reduced.
Also, when considering inverse monoid presentations that turn out to define groups we do not necessarily assume that the relations $aa^{-1}=a^{-1}a=1$ for $a\in A$ are in the presentation; 
the fact that an element represented by a letter is invertible might be shown in an entirely different way e.g.\ by deducing relations of the form $au=va=1$ for some words $u,v\in \ol{A}^*$.
\end{rmk}

\begin{exa}\label{Ex:New}
The following example, taken from \cite[page 90]{IMM}, shows that the prefix monoid can depend on the choice of presentation for the group. 
The prefix monoid of the group presentation 
\[
G = \Gp\pre{a,b}{aba=1} 
\]
is generated by $\{a, ab\} = \{a, a^{-1}\}$ which, since $b=a^{-2}$ in this group, is the whole group $G$. On the other hand, the prefix monoid of the presentation  
\[
G = \Gp\pre{a,b}{aab=1} 
\]
is generated by $\{a, a^2 \}$ and hence is equal to the submonoid of $G$ generated by $\{ a\}$. 
Since $b=a^{-2}$ this generator is redundant and eliminating it we see that $G$ is isomorphic to the infinite cyclic group generated by $\{a\}$. So in the latter case the prefix monoid is isomorphic to the infinite monogenic monoid $\{a^i : i \geq 0 \}$. 
Hence these two different presentations for $G$ yield two different prefix monoids. 
\end{exa}

It is immediately clear that all prefix monoids are finitely generated and group-em\-bed\-d\-a\-ble. So, the following general question arises naturally.

\begin{que}
Which finitely generated \gem monoids arise as prefix monoids?
\end{que}

\begin{defn}
Let $M$ be a finitely presented special inverse monoid,
$$
M = \Inv\pre{A}{w_i=1\; (i\in I)}.
$$
We call the submonoid 
$
R = \{ m \in M: mm^{-1}=1 \} 
$
of right units of $M$ the \emph{RU-monoid} of $M$. A monoid $T$ is an \emph{RU-monoid} if it is isomorphic to the RU-monoid of some finitely presented special inverse monoid.
\end{defn}

It follows from the argument in the proof of \cite[Proposition 4.2]{IMM} that the RU-monoid of  $M = \Inv\pre{A}{w_i=1\; (i\in I)}$ is generated by the set of all elements $[p]_M$ represented by prefixes $p$ of the set of defining relators $w_i$, $i\in I$. Of course the RU-monoid of a special inverse monoid $M$ does not depend on the choice of presentation for $M$, but different choices of special presentation can give different finite generating sets for the monoid given by the prefixes of the defining relators. Since an RU-monoid is by definition an RU-monoid of a finitely presented special inverse monoid, it follows that the set of prefixes of defining relators is finite, hence all RU-monoids are finitely generated, and it is very easy to see that they are necessarily right cancellative. Indeed, if $a,b,c$ are right units of any monoid then, choosing $x$ to be an element satisfying $cx=1$ we see that if  $ac=bc$ then $a=acx=bcx=b$. So, we instantly have an analogous question to the one before.

\begin{que}
Which finitely generated right cancellative monoids arise as RU-monoids?
\end{que}

\subsection{Green's relations and \Sch groups}

The most basic tool in studying the structure of semigroups are the five equivalence relations called \emph{Green's relations}. The first three of them,
$\R,\L,\J$, classify the elements of a semigroup according to the right/left/two-sided principal ideals they generate. Since we only work with monoids in this paper we give the definitions here just for monoids. So, in a monoid $S$ we have the  following definitions:
$$
a\,\R\,b \Lra aS=bS, \quad a\,\L\,b \Lra Sa=Sb, \quad a\,\J\,b \Lra SaS=SbS.
$$
Further, $\H=\R\cap\L$ and $\D=\R\vee\L$, which is just $\R\circ\L$ as it may be shown that $\R$ and $\L$ commute. We say that $a\in S$ is \emph{regular} if $a=axa$ for some $x\in S$. The $\D$-classes are exclusive with respect to regularity: either all elements of a $\D$-class are regular, or none of them. In the former case, 
each $\R$-class and each $\L$-class contains at least one idempotent. 
The $\H$-classes containing idempotents are maximal subgroups of a monoid and, in turn, every maximal subgroup arises in this way. Group $\H$-classes within 
a given $\D$-class are all isomorphic; hence, there is a natural way to associate a group with each regular $\D$-class. Also, note that by the definition of $\R$ it is immediate that in any monoid $M$, an element $m$ belongs to the $\R$-class of $1$ in $M$ if and only if $m$ is right invertible. In particular the RU-monoid of a special inverse monoid is equal to the $\R$-class of $1$ in that monoid.  

Even though non-regular $\D$-classes do not contain idempotents and hence do not contain any subgroups, they carry a ``hidden'' group structure encapsulated by the concept of a \emph{\Sch group}. Namely, let $H$ be an $\H$-class within a $\D$-class $D$, say of a monoid $S$. Define first the \emph{right stabiliser} of $H$,
$$
\Stab(H) = \{s\in S:\ Hs\subseteq H\}.
$$
Then, if $s\in \Stab(H)$, it turns out that the right translation corresponding to the element $s$, $\rho_s:a\mapsto as$ ($a\in S$), restricts to a permutation of the set $H$. It is not difficult to see that $\Stab(H)$ is a submonoid of $S$ and upon defining an equivalence relation $\sigma$ on $\Stab(H)$ by $(s,t)\in\sigma$ if and only if $\rho_s|_H=\rho_t|_H$ then $\sigma$ is a congruence and the quotient $\Stab(H)/\sigma$ is a group. 
This is the \emph{right \Sch group} $\Gamma(H)$ of $H$. Analogously we can define the the \emph{left} \Sch group of an $\H$-class, but it may be shown that it is always isomorphic to  the right one. This group has $|H|$ elements, and \Sch groups of $\H$-classes from the same $\D$-class are isomorphic to each other. As is expected,  the \Sch group of an $\H$-class $H$ from a regular $\D$-class is just isomorphic to the maximal subgroup contained in that class; in particular, if an $\H$-class $H$ is a group then $\Gamma(H)\cong H$. 

For further background in semigroup theory we refer to \cite{CP,How}, and, specifically for inverse semigroups, to \cite{Law,Pet}.

\subsection{Recursive enumerability and the Higman Embedding Theorem}

A set of natural numbers $A\subseteq\N$ is \emph {recursively enumerable} (\emph{r.e.}\ for short) if it is an image of a unary (primitive) recursive function,
that is, $A=\{\varphi(n):\ n\in\N\}$ for some primitive recursive $\varphi:\N\to\N$. The notion of recursive enumerability can be extended to languages as well 
(and then also to sets of tuples of words) via some of the standard enumerations of all words over a finite alphabet. 
It is one of the principal results of Turing \cite{Tur,Mal} that r.e.\ languages are precisely the languages of Turing machines: $L\subseteq\Sigma^*$ is r.e.\ if 
and only there exists a Turing machine $\mathcal{M}$ such that $L(\mathcal{M})=L$, which means that $\mathcal{M}$ halts and accepts on any input word $w\in L$, 
while if $w\not\in L$ then $\mathcal{M}$ either halts and rejects the word or works forever. Actually, this is the criterion of a language being r.e.\ that we 
use throughout as is the case in modern theoretical computer science \cite{HMU}.

Now we say that a finitely generated group $G$ is \emph{recursively presented} if 
$$
G = \Gp\pre{A}{w_i=1\; (i\in I)}
$$
where $A$ is a finite set and $\{w_i:\ i\in I\}$ is a r.e.\ language over $\ol{A}=A\cup A^{-1}$. Similarly, a monoid $M$ is \emph{recursively presented} if
$$
M=\Mon\pre{A}{u_i=v_i\; (i\in I)}
$$
for a finite set $A$ and a r.e.\ subset $\{(u_i,v_i):\ i\in I\}$ of $A^*\times A^*$.

\begin{thm}[The Higman Embedding Theorem \cite{Hig}]
A finitely generated group embeds into a finitely presented group if and only if it is recursively presented.
\end{thm}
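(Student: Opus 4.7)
The theorem has two directions. For the easy direction, suppose $G$ is finitely generated by $A$ and embeds into $H = \Gp\pre{B}{r_1=1,\ldots,r_n=1}$ via some homomorphism $\iota$. Choose, for each $a \in A$, a fixed word $u_a \in \ol{B}^*$ representing $\iota(a)$. The word problem of $H$ is recursively enumerable: one enumerates all finite products of conjugates of the $r_i^{\pm 1}$, reduces freely, and reports the resulting words. Substituting $u_a$ for each $a \in A$ converts this into an enumeration of the kernel of the natural map $FG(A) \twoheadrightarrow G$. Combined with the deterministic procedure for free reduction, this produces a recursive presentation of $G$ on the generators $A$.

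For the hard direction, the plan is to follow Higman's original strategy via the notion of a \emph{benign} subgroup: $H \leq K$ (with $K$ finitely generated) is benign in $K$ if the HNN-style extension $\pre{K,t}{t^{-1}ht = h \ (h \in H)}$ embeds in some finitely presented group. The ultimate aim is to show: if $G = \Gp\pre{A}{w_i = 1 \ (i \in I)}$ with $A$ finite and $\{w_i : i \in I\}$ r.e., then the normal closure $N$ of $\{w_i : i \in I\}$ in $F = FG(A)$ is benign in $F$. Once this is established, a standard HNN-extension construction exhibits $F/N \cong G$ as a subgroup of a finitely presented group.

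The argument for benignity of $N$ proceeds in two stages. The first stage develops the closure properties of benign subgroups inside a fixed ambient finitely generated group: finite intersections and joins of benign subgroups are benign, the trivial subgroup of a finitely generated free group is benign, benignity passes along suitable embeddings (so that a benign subgroup of a benign subgroup of a finitely presented group is benign), and every finitely generated subgroup of a finitely presented group is benign. These lemmas are each proved by explicit HNN-extension and amalgamated-free-product constructions that give a concrete embedding of the auxiliary extension into a finitely presented group, and they form the bookkeeping infrastructure of the proof.

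The second stage---and the main obstacle---is Higman's Rope Trick: \emph{every recursively enumerable subset $S$ of a finitely generated free group generates a benign subgroup}. The idea is to take a Turing machine $\mathcal{M}$ enumerating $S$ and code its configurations, tape alphabet, and transition function into the multiplication of an auxiliary group built by iterated HNN extensions and amalgamated free products over finitely generated free subgroups. Each transition of $\mathcal{M}$ is realised by a single group-theoretic relator, and a designated stable letter conjugates configurations to their successors. The critical and delicate point is to verify, via Britton's lemma and the normal form theorem for amalgamated products, that this coding introduces no spurious identifications beyond those forced by the computation of $\mathcal{M}$; this faithfulness check is the technical heart of the theorem. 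Applying the Rope Trick to $\{w_i : i \in I\}$ shows that the subgroup it generates is benign in $F$, and a further short argument using closure under conjugation and joins upgrades this to benignity of the normal closure $N$, at which point the HNN construction mentioned above finishes the embedding of $G$ into a finitely presented group.
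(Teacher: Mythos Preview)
The paper does not prove this theorem at all: it is stated as a classical result and attributed to Higman \cite{Hig}, with no accompanying argument. It functions purely as a background tool that the paper invokes repeatedly (e.g.\ in the proofs of Proposition~\ref{pro:mon-gr}, Theorem~\ref{thm:wHig}, and Theorem~\ref{thm:Sch-ru}). So there is nothing in the paper against which to compare your proposal.

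That said, your outline is a faithful high-level sketch of Higman's original strategy: the easy direction is handled correctly, and for the hard direction you correctly identify benign subgroups as the organising concept, list the relevant closure properties, and locate the crux in the Turing-machine encoding (what you call the Rope Trick). As a roadmap this is accurate. But be aware that what you have written is a \emph{plan}, not a proof: the closure lemmas each require nontrivial HNN/amalgam constructions with their own Britton's-lemma verifications, and the encoding step is genuinely long and delicate. In a paper like this one, which merely quotes the theorem, the appropriate move is exactly what the authors did---cite \cite{Hig} (or a textbook treatment such as \cite{LSch}) rather than attempt to reproduce the argument.
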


In fact, there is a counterpart of the previous theorem for semigroups/monoids and inverse semigroups/monoids, obtained, respectively, by Murski\u{\i} \cite{Mur} 
and Belyaev \cite{Bel}, so that a finitely generated (inverse) monoid embeds into a finitely presented one if and only if it is recursively presented.

Let $G = \Gp\pre{A}{w_i=1\; (i\in I)}$ is a finitely presented group. and that $L\subseteq\ol{A}^*$ is a r.e.\ language such that $H=\{[w]_G:\ w\in L\}$ is a subgroup of $G$. Then we say that $H$ is a \emph{recursively enumerable subgroup} of $G$. It is not difficult to see that any finitely generated subgroup of $G$ is recursively enumerable: if the generating elements of such a subgroup are represented by words $u_1,\dots,u_k\in\ol{A}^*$ then the (rational) language  $L=\{u_1,\dots,u_k\}^*$ suffices to see this. Therefore, by Higman's Theorem any finitely generated recursively presented group arises as a recursively enumerable subgroup of some finitely presented group. In fact, although we will not need it here, more generally it follows from the argument in \cite[Corollary to Theorem~1]{Hig} that any not necessarily finitely generated recursively presented group is isomorphic to a recursively enumerable subgroup of a finitely presented group.  

%
\section{A characterisation of prefix monoids}

By their very definition, prefix monoids are finitely generated submonoids of finitely presented groups. Therefore, our first aim is to take a closer look
at such monoids. It turns out that the following holds.

\begin{pro}\label{pro:mon-gr}
A finitely generated monoid embeds into a finitely presented group if and only if it is \gem and recursively presented.
\end{pro}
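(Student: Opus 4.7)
The plan is to prove the two directions separately, using Murski\u{\i}'s embedding theorem on the easy side and the Higman Embedding Theorem on the harder side.

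For the forward implication, assume $M$ is a finitely generated monoid embedded in a finitely presented group $G$. Group-embeddability of $M$ is immediate. For recursive presentability I would observe that any finitely presented group $G = \Gp\pre{A}{w_i = 1 \; (i\in I)}$ is at the same time finitely presented as a monoid (take the monoid alphabet $\ol{A} = A \cup A^{-1}$ and augment the relations $w_i = 1$ by $aa^{-1} = a^{-1}a = 1$ for each $a \in A$). Hence $M$ embeds in a finitely presented monoid, and the Murski\u{\i} embedding theorem quoted in the preliminaries forces $M$ to be recursively presented.

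For the converse, suppose $M$ is finitely generated, group-embeddable and recursively presented, and fix a recursive presentation
\[
M = \Mon\pre{A}{u_i = v_i \; (i\in I)}
\]
with $A$ finite and $\{(u_i,v_i):\ i\in I\}$ r.e.\ in $A^*\times A^*$. I would then pass to the universal group of this presentation,
\[
G = \Gp\pre{A}{u_i = v_i \; (i\in I)}.
\]
This group is generated by the finite set $A$, and, since $(u,v)\mapsto uv^{-1}$ is a computable map, its relator set $\{u_iv_i^{-1}:i\in I\}\subseteq\ol{A}^*$ is r.e.; so $G$ is a finitely generated recursively presented group. The universal property of $G$, combined with the hypothesis that $M$ embeds into some group $H$, forces the canonical monoid morphism $\eta:M\to G$ to be injective: the embedding $M\hookrightarrow H$ factors through $\eta$, so $\eta$ itself must be injective. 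Applying the Higman Embedding Theorem to $G$ then produces an embedding of $G$ into a finitely presented group $\tilde G$, and the composite $M\hookrightarrow G\hookrightarrow\tilde G$ is the required embedding.

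The whole nontrivial content sits in the invocation of Higman's theorem, which does the work of passing from a recursive presentation of $G$ to a finitely presented overgroup. The remaining ingredients---Murski\u{\i}'s theorem for the $(\Rightarrow)$ direction, and the universal-group characterisation of group-embeddability for the $(\Leftarrow)$ direction (i.e.\ that the canonical map to the universal group is injective precisely when the monoid sits inside some group)---are classical, and no serious obstacle is expected beyond correctly setting up the universal group and checking that its relator set remains r.e.
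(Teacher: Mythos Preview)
Your proof is correct and follows essentially the same route as the paper: Murski\u{\i} for the forward direction, and for the converse the embedding of $M$ into the group with the same presentation followed by Higman. The only cosmetic difference is that the paper isolates the injectivity of $\eta:M\to\Gp\pre{A}{\mathfrak{R}}$ as a separate folklore lemma (Lemma~\ref{lem:mon-gr}), whereas you obtain it inline from the universal property of this group; the content is the same.
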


To prove this, we use an auxiliary result that is often labelled as folklore (such as in \cite{CRR,Mea,CF}). 
A proof of the result may be found in \cite[Proposition~0.9.1]{Cain}, which in turn makes use of \cite[Construction~12.3]{CP}. 

\begin{lem}\label{lem:mon-gr}
If the monoid $M=\Mon\pre{A}{\mathfrak{R}}$ is group-embeddable, then it embeds into the group with the same presentation, $G=\Gp\pre{A}{\mathfrak{R}}$.
\end{lem}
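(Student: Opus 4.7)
The plan is to deduce the lemma from the universal property of $G=\Gp\pre{A}{\mathfrak{R}}$ as the \emph{group completion} of the monoid $M=\Mon\pre{A}{\mathfrak{R}}$. First I would introduce the canonical map $\iota\colon M\to G$ defined by $\iota([w]_M)=[w]_G$ for $w\in A^*$. This is well defined because every defining relation $u_i=v_i$ of $\mathfrak{R}$ trivially also holds in $G$, so words of $A^*$ that are $\mathfrak{R}$-equivalent as elements of $M$ are also equal in $G$.

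The key step is to verify that $(G,\iota)$ enjoys the following universal property: for every group $H$ and every monoid homomorphism $f\colon M\to H$, there exists a unique group homomorphism $\psi\colon G\to H$ such that $\psi\circ\iota=f$. To prove this, I would use the universal property of the free group $FG(A)$ to extend the assignment $a\mapsto f([a]_M)$, for $a\in A$, to a group homomorphism $\tilde{f}\colon FG(A)\to H$. For each defining relation $u_i=v_i$ in $\mathfrak{R}$, one has
$$
\tilde{f}(u_i v_i^{-1})=f([u_i]_M)\,f([v_i]_M)^{-1}=1
$$
in $H$, so $\tilde{f}$ vanishes on the normal closure in $FG(A)$ of $\{u_i v_i^{-1}:i\in I\}$. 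Hence $\tilde{f}$ factors through the quotient $FG(A)\to G$, yielding a group homomorphism $\psi\colon G\to H$. A direct check on the images of the generators of $A$ gives $\psi\circ\iota=f$, and uniqueness of $\psi$ is immediate from the fact that $\iota(M)$ contains (the classes of) the generators of $A$ and so generates $G$ as a group.

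To finish, assume $M$ is \gem, so that there exist a group $H$ and an injective monoid homomorphism $\phi\colon M\hookrightarrow H$. Applying the universal property just established to $f=\phi$, we obtain a group homomorphism $\psi\colon G\to H$ with $\psi\circ\iota=\phi$. Injectivity of $\phi$ then forces injectivity of $\iota$: if $\iota(m_1)=\iota(m_2)$ then $\phi(m_1)=\psi(\iota(m_1))=\psi(\iota(m_2))=\phi(m_2)$, whence $m_1=m_2$. Thus $\iota$ is an embedding of $M$ into $G=\Gp\pre{A}{\mathfrak{R}}$, as claimed. I do not anticipate any serious obstacle here: the argument is a routine invocation of universal properties, and the only mild subtlety lies in correctly matching the monoid-style relations $u_i=v_i$ of $\mathfrak{R}$ with the group-style relators $u_i v_i^{-1}$ defining $G$ as a quotient of $FG(A)$.
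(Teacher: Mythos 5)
Your argument is correct: the group $\Gp\pre{A}{\mathfrak{R}}$ is precisely the universal (enveloping) group of the monoid $\Mon\pre{A}{\mathfrak{R}}$, and your factorisation of an arbitrary monoid embedding $\phi\colon M\hookrightarrow H$ through the canonical map $\iota$ is exactly the standard way to see that $\iota$ is injective whenever $M$ is group-embeddable. The paper does not prove this lemma itself --- it labels it folklore and cites Cain's thesis (Proposition~0.9.1) and Clifford--Preston (Construction~12.3) --- and your proof is essentially the argument contained in those references, so there is nothing further to add.
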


\begin{proof}[Proof of Proposition \ref{pro:mon-gr}]
($\Ra$) Assume $M$ embeds into a finitely presented group $G$. Then $G$ is also finitely presented as a monoid. Now the conclusion follows by Murski\u{\i}'s semigroup/monoid
version of the Higman Embedding Theorem \cite{Mur}.

($\La$) Suppose $M=\Mon\pre{A}{u_i=v_i\; (i\in I)}$ is \gem and $\{(u_i,v_i): i \in I\}$ is a r.e.\ subset of $A^*\times A^*$. By the previous lemma, $M$ embeds
into the group $G$ with the same presentation, which is the same as $\Gp\pre{A}{u_iv_i^{-1}=1\; (i\in I)}$ where $\{u_iv_i^{-1}:\ i\in I\}$ is now a r.e.\ language
over $\ol{A}$. Then, by the Higman Embedding Theorem, $G$ in turn embeds into a finitely presented group, and hence so does $M$.
\end{proof}

\begin{cor}\label{cor:prefix-Hig}
Any prefix monoid is a \gem recursively presented monoid.
\end{cor}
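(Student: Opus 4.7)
The plan is to derive the corollary directly from Proposition~\ref{pro:mon-gr} by verifying its hypotheses. Let $M$ be a prefix monoid; by definition there is a finite presentation $G = \Gp\pre{A}{w_i=1 \; (i \in I)}$ of a finitely presented group such that $M$ is isomorphic to the submonoid of $G$ generated by the elements $[p]_G$, where $p$ ranges over prefixes of the relator words $w_i$. The first point to note is that $M$ is finitely generated: because the presentation is finite, both $I$ and each $w_i$ are finite, so there are only finitely many prefixes $p$ arising. Next, since $M$ embeds into $G$, which is a group, $M$ is group-embeddable.

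With these two facts in hand, the forward direction ($\Ra$) of Proposition~\ref{pro:mon-gr} applies verbatim: a finitely generated monoid which embeds into a finitely presented group is group-embeddable and recursively presented. Thus $M$ is a \gem recursively presented monoid, as required.

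There is essentially no obstacle in this argument beyond invoking Proposition~\ref{pro:mon-gr}; the only items to check are finite generation (immediate from finiteness of the presentation) and embeddability into a finitely presented group (immediate from the definition). The substantial content sits upstream, in Proposition~\ref{pro:mon-gr}, whose ``hard'' direction rests on Murski\u{\i}'s monoid version of the Higman Embedding Theorem.
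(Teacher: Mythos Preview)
Your proof is correct and matches the paper's approach exactly: the corollary is immediate from the forward direction of Proposition~\ref{pro:mon-gr} once one notes that a prefix monoid is, by definition, a finitely generated submonoid of a finitely presented group. The paper does not even supply a separate proof for this corollary, as it follows directly.
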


The previous corollary immediately raises the question: Which \gem recursively presented monoids arise as prefix monoids? Possibly all of them? We refute this by quickly looking at
the special case when the prefix monoid (of a finitely presented group) is itself a group.

\begin{lem}\label{lem:pr-group}
If a group arises as a prefix monoid then it is finitely presented.
\end{lem}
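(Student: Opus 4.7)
The plan is to show that if the prefix monoid $P$ of a finite presentation $G=\Gp\pre{A}{w_i=1\;(i\in I)}$ is a group, then $P$ coincides with a finitely presented subgroup of $G$---in fact with $G$ itself, once unused generators are trimmed away. The key is a single observation: in a group, two consecutive prefixes of a relator let one isolate the letter between them.

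First I would reduce to the case that every letter of $A$ occurs (positively or negatively) in at least one relator. Let $A'\subseteq A$ be the subset of letters that do so appear. Since the letters in $A\setminus A'$ are involved in no relation, $G$ decomposes as a free product $G'*F(A\setminus A')$, where $G'=\Gp\pre{A'}{w_i=1\;(i\in I)}$ is again a finite presentation. Every prefix of every $w_i$ is a word over $\ol{A'}$, so via the embedding $G'\hookrightarrow G$ the prefix monoid $P$ is canonically identified with the prefix monoid of the above presentation of $G'$. Hence we may replace $G$ by $G'$ and assume from the outset that $A=A'$.

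Under this assumption I claim $A\subseteq P$. Given $a\in A$, pick a relator $w_i$ in which $a$ or $a^{-1}$ occurs and consider its first such occurrence, writing $w_i=uxv$ with $x\in\{a,a^{-1}\}$ and with $u$ free of $a$ and $a^{-1}$. Both $u$ and $ux$ are prefixes of $w_i$, so $[u]_G$ and $[ux]_G$ lie in $P$. Because $P$ is a group, $[u]_G^{-1}\in P$, and hence $[x]_G=[u]_G^{-1}[ux]_G\in P$; inverting once more if $x=a^{-1}$ gives $[a]_G\in P$. Thus $P$ is a subgroup of $G$ containing the generating set $A$, so $P=G$; as $G$ is finitely presented, so is $P$. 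The main point of care is that the reduction really preserves the prefix monoid, but this is guaranteed by the free product decomposition; beyond the trivial subcase $u=\varepsilon$ in the key step there is no real obstacle.
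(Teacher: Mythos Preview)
Your proof is correct and follows essentially the same route as the paper's: both hinge on the observation that consecutive prefixes $[u]_G$ and $[ux]_G$ of a relator, together with the hypothesis that $P$ is a group, force $[x]_G\in P$ for every letter $x\in\ol{A}$ occurring in a relator, whence $P$ is the subgroup generated by those letters. The only cosmetic difference is ordering: you trim the unused generators at the start and then argue $P=G$, whereas the paper keeps all generators, identifies $P$ with the subgroup on the letters $B\subseteq A$ that actually appear, and invokes the free product decomposition $G=\Gp\pre{B}{w_i=1}\ast FG(A\setminus B)$ at the end.
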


\begin{proof}
Assume that $G=\Gp\pre{A}{w_i=1\; (i\in I)}$ is a finitely presented group whose prefix monoid $P$ is also a group. Let $a\in\ol{A}$ be any letter occurring in some $w_i$, so that
$w_i=w'aw''$. Then both $[w']_G$ and $[w'a]_G$ are values of prefixes of $w_i$ in $G$, so they belong to the generating set of $P$. However, the assumption that $P$
is a group implies that $[w']_G^{-1}\in P$, and so 
$$[a]_G=[w']_G^{-1}[w'a]_G \in P,$$ 
as well as $[a]_G^{-1}\in P$. We conclude that $P$ coincides with the subgroup of $G$ generated by the subset $B\subseteq A$ of all letters from $A$ appearing in 
the relators $w_i$, $i\in I$. But since 
$G = \Gp\pre{B}{w_i=1\; (i\in I)} \ast FG(A \setminus B)$ 
it is then straightforward to see that $P=\Gp\pre{B}{w_i=1\; (i\in I)}$, so $P$ is finitely presented.
\end{proof}

In fact, the converse is true as well, but we show this as part of the following more general result for group-embeddable monoids.

\begin{pro}\label{pro:fin-pr}
Every \gem finitely presented monoid arises as a prefix monoid.
\end{pro}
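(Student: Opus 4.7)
The plan is to start from any finite monoid presentation $M = \Mon\pre{A}{u_i = v_i\ (i \in I)}$ and exhibit an explicit finite group presentation whose associated prefix monoid coincides, inside the defined group, with the image of $M$.

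The first ingredient is Lemma~\ref{lem:mon-gr}: since $M$ is \gem, it embeds into the finitely presented group
\[
G = \Gp\pre{A}{u_i = v_i\ (i \in I)} = \Gp\pre{A}{u_iv_i^{-1} = 1\ (i \in I)}
\]
via $a \mapsto [a]_G$, and its image is precisely the submonoid of $G$ generated by $\{[a]_G : a \in A\}$.

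The construction I would use is to enlarge this presentation by the auxiliary ``trivial'' relator words $aa^{-1}$, one for each $a \in A$:
\[
G = \Gp\pre{A}{aa^{-1} = 1\ (a \in A),\ u_iv_i^{-1} = 1\ (i \in I)}.
\]
This defines the same group because each $aa^{-1}$ is a free-group identity; the earlier Remark explicitly permits non-reduced relator words, so this move is legitimate. Its sole purpose is to force every generator $a \in A$ to appear as a nonempty prefix of some relator.

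Let $P$ be the prefix monoid of this presentation. I would then verify $P = M$ in $G$ by a direct double inclusion. For $M \subseteq P$, each $a \in A$ occurs as a prefix of $aa^{-1}$, so $[a]_G \in P$, and hence the submonoid generated by $[A]_G$ lies in $P$. For $P \subseteq M$, one inspects the generators of $P$, i.e.\ the values of prefixes of the relators: prefixes of $aa^{-1}$ give only $1$ and $[a]_G \in M$. Writing $u_i = a_1 \cdots a_p$ and $v_i = b_1 \cdots b_q$, any prefix of $u_iv_i^{-1}$ of length at most $p$ is a prefix of $u_i$ and so its value is in $M$; any longer prefix has the form $u_i\,b_q^{-1} \cdots b_{q-\ell+1}^{-1}$, whose value in $G$ is
\[
[u_i]_G \cdot [b_q^{-1} \cdots b_{q-\ell+1}^{-1}]_G = [v_i]_G \cdot [b_q^{-1} \cdots b_{q-\ell+1}^{-1}]_G = [b_1 \cdots b_{q-\ell}]_G,
\]
which is again in $M$. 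Hence $P \subseteq M$, and therefore $P = M$, proving that $M$ is a prefix monoid in $G$.

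No serious obstacle is anticipated; the proof is essentially a direct verification once the right presentation is chosen. The only conceptual subtlety is the reliance on the freedom, noted in the preliminaries, to use non-reduced relator words: this is precisely what allows the trivial relators $aa^{-1}$ to contribute the letters of $A$ to the prefix monoid, which would otherwise be missing if a generator $a$ failed to occur as an initial letter of some $u_i$ or $v_i$.
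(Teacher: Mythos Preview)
Your proof is correct and follows essentially the same route as the paper: both use Lemma~\ref{lem:mon-gr} to embed $M$ into $G=\Gp\pre{A}{u_iv_i^{-1}=1\ (i\in I),\ aa^{-1}=1\ (a\in A)}$, and both compute the prefix monoid of this presentation by observing that prefixes of $u_iv_i^{-1}$ extending past $u_i$ represent suffixes of $v_i$ (the paper phrases this as $[u_ip]_G=[q^{-1}]_G$ when $v_i^{-1}=pq$, which is exactly your computation $[u_ib_q^{-1}\cdots b_{q-\ell+1}^{-1}]_G=[b_1\cdots b_{q-\ell}]_G$). Your explicit double-inclusion write-up is, if anything, a bit more transparent than the paper's version.
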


\begin{proof}
Let $M=\Mon\pre{A}{u_i=v_i\; (i\in I)}$ be a \gem finitely presented monoid. By Lemma \ref{lem:mon-gr}, $M$ embeds into the group $G=\Gp\pre{A}{u_i=v_i\; (i\in I)}$ 
via the homomorphism induced by the identity map on $A$. 
However, the following is then also a presentation for $G$:
$$
\Gp\pre{A}{u_iv_i^{-1}=1\; (i\in I),\ aa^{-1}=1\; (a\in A)}.
$$
Now, computing the generating set for the prefix monoid $P$ with respect to this presentation of $G$ would lead us to the prefixes of the words $u_i$, words of the form
$u_ip$, where $p$ is a prefix of $v_i^{-1}$, and the individual letters $a\in A$. In the first of these cases, however, note that if $v_i^{-1}=pq$ then $v_i=q^{-1}p^{-1}$
and $[u_ip]_G=[q^{-1}]_G$, so indeed $P$ is generated by the elements of $G$ represented by the prefixes of $u_i,v_i$ ($i\in I$) and the letters $a\in A$. Since both
$u_i$ and $v_i$ are positive words (containing no inverses of letters), we conclude that $P$ is the submonoid of $G$ generated by $\{[a]_G$:\ $a\in A\}$, which is isomorphic to $M$.
\end{proof}

Proposition~\ref{pro:fin-pr} shows that all \gem finitely presented monoids occur as prefix monoids. 
In contrast, Lemma~\ref{lem:pr-group} shows not all \gem recursively presented monoids occur as prefix monoids (for example, the non-finitely recursively presented groups).  
However, we will now see that by taking a free product of such a monoid with a free monoid of sufficient finite rank will give a prefix monoid. This leads to a complete 
characterisation of prefix monoids in the following result.    

Let $\Sigma_k^*$ denote the free monoid of rank $k$. 

\begin{thm}\label{thm:almost-all}
For every \gem recursively presented monoid $M$ there is a natural number $\mu_M$ such that $M \ast \Sigma_k^*$ is a prefix monoid if and only if $k \geq \mu_M$.
Moreover, up to isomorphism, the class of all prefix monoids is equal to
$$\
\{
M \ast \Sigma_k^* : M \ \text{is a \gem recursively presented monoid and} \ k \geq \mu_M  
\}.
$$
\end{thm}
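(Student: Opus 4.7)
The plan is to prove the theorem in three parts. First, the ``Moreover'' statement follows trivially from the ``iff'' claim: every prefix monoid $P$ is group-embeddable and recursively presented by Corollary~\ref{cor:prefix-Hig}, so $P \cong P \ast \Sigma_0^*$ lies in the claimed class with $M = P$ and $k=0=\mu_P$. Second, the set $\{k : M \ast \Sigma_k^* \text{ is a prefix monoid}\}$ is upward-closed: if $M \ast \Sigma_k^*$ is the prefix monoid of a finite presentation $\Gp\pre{A}{\mathfrak R}$, then adjoining a fresh letter $c$ and the trivial relator $cc^{-1}=1$ yields a finite presentation whose prefix monoid is $(M \ast \Sigma_k^*) \ast \{c\}^* \cong M \ast \Sigma_{k+1}^*$, since $c$ becomes a free generator in the resulting free-product group. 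Granting upward closure, it then suffices to prove non-emptiness: for every g.e.\ r.p.\ monoid $M$, some $M \ast \Sigma_k^*$ is a prefix monoid. The threshold $\mu_M$ is then the minimum such $k$, and the ``iff'' follows immediately.

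For the non-emptiness step, my plan is to apply Proposition~\ref{pro:mon-gr} to embed $M$ into a finitely presented group $\tilde G = \Gp\pre{A}{r_1,\dots,r_m}$, and fix words $\mu_1,\dots,\mu_s \in \ol A^*$ whose images in $\tilde G$ generate $M$ as a submonoid. I then introduce fresh letters $Y = \{y_1,\dots,y_s\}$ (aliases for the $M$-generators) together with $C = \{c_1,\dots,c_{m+s}\}$ (``conjugating'' letters), and consider the finite group presentation
\begin{align*}
G^* = \Gp\langle A \cup Y \cup C \mid\ &c_i\, r_i\, c_i^{-1}\ (i=1,\dots,m),\\
&c_{m+j}\, y_j \mu_j^{-1}\, c_{m+j}^{-1}\ (j=1,\dots,s),\\
&y_j y_j^{-1}\ (j=1,\dots,s)\rangle.
\end{align*}
Each relator $c r c^{-1}=1$ is equivalent in any group to $r=1$, so this presents $G^* \cong \tilde G \ast FG(C)$, with each $y_j$ identified to $[\mu_j]_{\tilde G}$ and the $c_l$'s free generators of the free factor $FG(C)$. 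I would then analyze the prefix monoid $P$ of this presentation. The trivial relators $y_j y_j^{-1}$ contribute $y_j = [\mu_j]_{\tilde G}$ as prefixes, so the submonoid they generate in $G^*$ is precisely $M$. The conjugated relators contribute prefixes whose images in $G^*$ are finitely many elements of the form $c_l$ and $c_l \cdot h$ with $h \in \tilde G$ a (signed) prefix of the underlying relator word. By the normal form theorem for free products, the submonoid of $G^*$ generated by these ``$c_l$-starting'' elements is a free monoid $F$ of some finite rank $k$; and since $M$ lies entirely in the $\tilde G$-factor while every generator of $F$ begins with a letter from the $FG(C)$-factor, their combined submonoid is the monoid free product $M \ast F \cong M \ast \Sigma_k^*$.

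The main obstacle is the free-product analysis in the final step: one must verify both that the candidate generators of $F$ are all distinct in $G^*$ and admit no nontrivial monoid relations among themselves, and that combining them with the $y_j$'s yields precisely the monoid free product $M \ast F$ rather than something smaller with unexpected relations. Both points rest on the free-product normal form in $\tilde G \ast FG(C)$: conjugating each relator by a distinct fresh $c_l$ forces every ``bad'' prefix to acquire a canonical alternating normal form, so distinct products of the chosen generators automatically yield distinct elements of $G^*$, and no collapses occur between the $M$-factor and the free factor $F$.
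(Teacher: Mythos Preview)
Your overall strategy---upward closure by adjoining a fresh letter with a trivial relator, and the ``Moreover'' clause following from the ``iff''---matches the paper's exactly, and your idea of conjugating the defining relators by fresh letters is close in spirit to the paper's use of a single conjugating letter $s$. However, there is a genuine gap in your free-product analysis.

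You claim that the prefix monoid $P$ of your presentation decomposes as the monoid free product $M*F$, where $F$ is the submonoid generated by all the ``$c_l$-starting'' prefixes. The freeness of $F$ itself is fine, but the assertion that $M$ and $F$ generate their free product inside $G^*$ is false. The problem is not at the \emph{start} of the $F$-generators but at their \emph{end}: each $c_lh$ terminates in an element $h\in\tilde G$, which merges with any following element of $M$. Concretely, for every $j$ your relator $c_{m+j}\,y_j\mu_j^{-1}\,c_{m+j}^{-1}$ contributes both $c_{m+j}$ and $c_{m+j}y_j=c_{m+j}[\mu_j]$ to $F$; since $[\mu_j]\in M$, we have
\[
(c_{m+j})\cdot[\mu_j]\;=\;c_{m+j}[\mu_j]
\]
in $G^*$, a nontrivial relation between distinct normal-form elements of the abstract free product $M*F$. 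So the natural map $M*F\to G^*$ is not injective, $P$ is a proper quotient of $M*F$, and your computed rank $k$ is wrong. Even if $P$ happens to be isomorphic to $M*\Sigma_{k'}^*$ for some other $k'$ (which is plausible in small examples), you have not argued this, and doing so would require a genuinely different analysis.

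The paper sidesteps this difficulty by a different mechanism. Rather than introducing the generators of $M$ via relators $y_jy_j^{-1}$ (which places them nakedly in the $\tilde G$-factor), it uses a single extra letter $t$ and relators $tbt^{-1}tb^{-1}t^{-1}=1$ for each generator $b$ of $M$. The resulting prefix generators $t$ and $tbt^{-1}$ both begin and end in $t^{\pm1}$, so their boundaries never merge with elements of $G$; after conjugating by $t$ one obtains $t$ and $b$, which visibly generate $M*\{t\}^*$ inside $G*FG(s,t)$ by the normal form theorem. This ``boundary protection'' is exactly the ingredient your construction lacks.
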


\begin{proof}
Let us start by noting that if $G$ is a finitely presented group that embeds the finitely generated monoid $M$, then there exists a finite presentation 
$$\Gp\pre{A}{w_i=1\; (i\in I)}$$
of $G$ such that $M$ is isomorphic to its submonoid generated by some subset $B\subseteq A$. So, actually, we may identify $M=\Mon\gen{[b]_G:\ b\in B}$. 
We begin by first showing that there exists some finite set $C$ and a presentation for $G*FG_2$, the free product of $G$ and a free group of rank 2, 
such that $M*C^*$ is isomorphic to the prefix monoid for the presentation of $G*FG_2$ in question.

The presentation of $K=G*FG_2$ for which we aim to describe the prefix monoid is the following one:
$$
\Gp\pre{A,s,t}{sw_is^{-1}=1\; (i\in I),\ tbt^{-1}tb^{-1}t^{-1}=1\; (b\in B)}.
$$
The required prefix monoid $P$ is generated by the elements represented in $G$ by the words $sp$, where $p$ is a (possibly empty) prefix of $w_i$ for some 
$i\in I$, and by $t,tb,tbt^{-1}$ ($b\in B$). Let $P_1$ denote the submonoid of $P$ generated by $\{[t]_K,[tb]_K,[tbt^{-1}]_K:\ b\in B\}$, and $P_2$ be the 
submonoid generated by $\{[sp]_K:\ p\in\pref(w_i),\; i\in I\}$. We claim that:
\begin{itemize}
\item[(1)] $P_1 \cong M * \{[t]_K\}^*$;
\item[(2)] $P_2$ is a free monoid of finite rank;
\item[(3)] $P \cong P_1 * P_2$.
\end{itemize}

For (1), start by noting that the generators of $P_1$ of the second type are redundant, as $[tb]_K=[tbt^{-1}]_K[t]_K$ for all $b\in B$. 
So $P_1$ is the submonoid of $K$ generated by $[t]_K$ and $\{[tbt^{-1}]_K:\ b\in B\}$. 
Now $P_1$ is isomorphic to 
$[t]_K^{-1}P_1[t]_K$ which is the submonoid of $K$ generated by $\{[b]_G:\ b\in B\}$ and $[t]_K$, and this monoid is isomorphic to $M*\{[t]_K\}^*$ 
by the Normal Form Theorem for free products \cite[Theorem IV.1.2]{LSch} applied to the group $K$. 
Let us note at this point that all elements of $P_1$ are of the form $[tm_1\dots tm_kt^{-1}]_K$ for some $k\geq 1$ and $m_1,\dots,m_k\in B^*$.

To prove (2), we claim that $P_2$ is a free monoid on $\{[sp]_K:\ p\in\pref(w_i),\; i\in I\}$. Indeed, if
$$
[sp_1]_K\dots [sp_m]_K = [sp_1\dots sp_m]_K = [sq_1\dots sq_r]_K = [sq_1]_K\dots [sq_r]_K
$$
holds in $P_2$ (and so in $K$) then again by the Normal Form Theorem in free products we must have $m=r$ and $[p_k]_G=[q_k]_G$ for all $1\leq k\leq m$, 
so $[sp_k]_K=[sq_k]_K$ for $1 \leq k \leq m$, 
hence $P_2$ satisfies no nontrivial equality among its generators.

Finally, for (3), notice first that $P_1\cup P_2$ generates $P$. 
Also, by the observations already made in the previous two paragraphs, we have that the reduced forms (with respect to $K$) of all elements of $P_1$ are 
either of the type $[t^{u_1}m_1\dots t^{u_k}m_k]_K$ or of the type $[t^{u_1}m_1\dots t^{u_k}m_kt^{-1}]_K$ for some $u_j\geq 1$ and  words $m_1,\dots,m_k\in B^*$ 
such that $[m_j]_G\neq 1$ for all $1\leq j\leq k$. Similarly, the reduced forms of all elements of $P_2$ are of the type $[s^{v_1}p_1\dots s^{v_l}p_l]_K$ for some
$v_j\geq 1$ and words $p_1,\dots,p_l\in\ol{A}^*$ (specifically, all these words are non-empty prefixes of relator words $w_i$, $i\in I$) such that $[p_j]_G\neq 1$ 
for all $1\leq j\leq l$. Therefore, if we have an equality 
$$
[\alpha_1 \beta_1 \ldots \alpha_m \beta_m]_K = [\gamma_1 \delta_1 \ldots \gamma_n \delta_n]_K
$$
where $[\alpha_q]_K,[\gamma_r]_K\in P_1$, $[\beta_q]_K,[\delta_r]_K\in P_2$ ($1\leq q\leq m$, $1\leq r\leq n$) are all non-trivial, except possibly some of 
$[\alpha_1]_K,[\beta_m]_K,[\gamma_1]_K,[\delta_n]_K$, and the words $\alpha_q, \beta_q, \gamma_q$ and $\delta_q$ are all words in the reduced forms described above for $P_1$ and $P_2$. 

Since every word $\alpha_q,\gamma_r$ begins with $t$ and ends with either a letter from $B$ or $t^{-1}$, and every word $\beta_q,\delta_r$ begins with $s$ and ends with 
a letter from $A$, it follows that both the words $\alpha_1 \beta_1 \ldots \alpha_m \beta_m$ and $\gamma_1 \delta_1 \ldots \gamma_n \delta_n$ are already in reduced form 
with respect to the free product $K$. This yields an equality of reduced forms in $K$ of the following type:
$$
[x_1y_1\dots x_\sigma y_\sigma]_K = [x'_1y'_1\dots x'_\tau y'_\tau]_K,
$$
where 
$\alpha_1 \beta_1 \ldots \alpha_m \beta_m = x_1y_1\dots x_\sigma y_\sigma$ and $\gamma_1 \delta_1 \ldots \gamma_n \delta_n = x'_1y'_1\dots x'_\tau y'_\tau$ and 
each $x_j,x_j'$ is either a power of $t$, or a power of $s$, or of the form $t^{-1}s^v$ for some $v\geq 1$ and each $y_j,y_j'$ is a word over $\ol{A}$ representing 
a non-trivial element of the group $G$ (in case of words occurring immediately after powers of $t$, there are in fact words from $B^*$). By employing the Normal Form Theorem 
in free products once again, we conclude that $\sigma=\tau$ and that $x_j=x'_j$ and $[y_j]_G=[y'_j]_G$ for all $1\leq j\leq \sigma$. But then combining this with the equalities 
of words $\alpha_1 \beta_1 \ldots \alpha_m \beta_m = x_1y_1\dots x_\sigma y_\sigma$ and $\gamma_1 \delta_1 \ldots \gamma_n \delta_n = x'_1y'_1\dots x'_\tau y'_\tau$ it follows 
that $m=n$, $[\alpha_q]_K=[\gamma_q]_K$ and $[\beta_q]_K=[\delta_q]_K$ for all $1\leq q\leq m$. This suffices to establish that $P$ is a free product of $P_1$ and $P_2$.

For the remainder of the proof, assume that, for some $k$, $M*\Sigma_k^*$ is a prefix monoid in the finitely presented group $L=\Gp\pre{X}{\mathfrak{R}}$. Let $y$ be a symbol 
not in $X$, and consider the group
$$
L' = \Gp\pre{X,y}{\mathfrak{R},\; yy^{-1}=1}.
$$
We have that $L'\cong L*FG(y)$. Then the prefix monoid for the given presentation of $L'$ is generated by the prefix monoid of the considered presentation of $L$ and the element 
represented by the letter $y$. Therefore, the prefix monoid of $L'$ is isomorphic to $M * \Sigma_k^* * \{y\}^* \cong M*\Sigma_{k+1}^*$. Hence, if $\mu_M$ is the minimal value of $k$ 
such that $M*\Sigma_k^*$ arises as a prefix monoid (of a finitely presented group), it follows that $M*\Sigma_k^*$ is a prefix monoid for all larger values of $k$. From this remark, 
the statement of the first sentence in the theorem follows. 

Conversely, given any \gem recursively presented monoid $M$ that arises as a prefix monoid we have $\mu_M=0$ and so $M \cong M \ast \Sigma_0^*$ belongs to the class in the 
statement of theorem.  
\end{proof}

\begin{rmk}
The size of the finite set $C$ appearing in the first part of the proof of the previous result is bounded from above by one plus the sum of lengths of relators needed to define 
the finitely presented group $G$ so that the generating set of $M$ is included into the generating set of $G$. More precisely, $|C|-1$ is actually the number of different elements 
of the group $G$ represented by all prefixes of relators.
\end{rmk}


\section{Groups of units and \Sch groups of prefix monoids}

Bearing in mind the goals of this section, the characterisation of the classes of groups arising as groups of units and \Sch groups in prefix monoids, respectively, we first record 
several basic facts about \gem monoids. Actually, the scope of some of these auxiliary results extend to the wider classes of (right, left) cancellative monoids.

\begin{pro}\label{pro:general}
Let $M$ be a monoid with $x, y \in M$ and let $U$ be its group of units.
\begin{itemize}
\item[(1)] If $M$ is right cancellative then $x\,\L\,y$ if and only if $y=ux$ for some $u\in U$. Consequently, $L_x=Ux$.
\item[(2)] If $M$ is left cancellative then $x\,\R\,y$ if and only if $y=xu$ for some $u\in U$. Consequently, $R_x=xU$.
\item[(3)] If $M$ is cancellative then $H_x=Ux\cap xU$ and $D_x=UxU$.
\item[(4)] 
If $M$ is a submonoid of a group $G$ then the \Sch group of $H_x$ is isomorphic to $U\cap x^{-1}Ux$. 
\end{itemize}
\end{pro}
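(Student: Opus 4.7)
Parts (1)--(3) are direct consequences of the definitions and I would dispatch them together. For (1), unpacking $x\,\L\,y$ as $Mx=My$ gives $y=sx$ and $x=ty$ for some $s,t\in M$; substituting yields $x=tsx$, and right cancellativity forces $ts=1$. Substituting the other way gives $y=sty$, so similarly $st=1$, hence $s,t\in U$. The reverse implication is immediate, and $L_x=Ux$ follows. Part (2) is formally dual. For (3), $H_x=Ux\cap xU$ is immediate from $H_x=L_x\cap R_x$ combined with (1) and (2). Since $\R$ and $\L$ commute, $z\in D_x$ means there exists $w$ with $x\,\L\,w\,\R\,z$; by (1) and (2), $w=ux$ and $z=wv=uxv$ for some $u,v\in U$. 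Conversely, any $z=uxv$ with $u,v\in U$ satisfies $ux\in L_x$ and $z\in R_{ux}$, so $z\in D_x$.

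The heart of the proposition is part (4), and here one must be careful about passing between $M$ and $G$. Writing $H=H_x$, my first step is to show that $\Stab(H)=\{s\in M:xs\in H\}$. The forward inclusion is immediate since $x\in H$. Conversely, if $xs\in H$ and $y\in H$, then by (1) we may write $y=ux$ with $u\in U$, so $ys=u(xs)$; since $uU=U$ and $ux\in H=xU$ gives $(ux)U=xU$, one gets $ys\in Ux\cap xU=H$. Next, I would work inside the ambient group $G$ to compute this set explicitly. If $s\in M$ and $xs\in H=xU\cap Ux$, then $xs\in xU$ together with cancellation in $G$ yields $s\in U$, while $xs\in Ux$ yields $s\in x^{-1}Ux$. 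Conversely, any $s\in U\cap x^{-1}Ux$ lies in $U\subseteq M$ and satisfies $xs\in xU\cap Ux=H$. Therefore $\Stab(H)=U\cap x^{-1}Ux$.

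It remains to identify the congruence $\sigma$ on $\Stab(H)$ and the resulting \Sch group. Since $M$ sits inside the group $G$, it is cancellative, so the equality $\rho_s|_H=\rho_t|_H$ evaluated at $x$ already gives $xs=xt$ and hence $s=t$; thus $\sigma$ is the identity relation, and $\Gamma(H)$ coincides with $\Stab(H)$ under its inherited multiplication. Because $\Stab(H)=U\cap x^{-1}Ux$ is a subgroup of $G$ contained in $U\subseteq M$, the multiplications in $M$ and in $G$ agree on it, yielding the required isomorphism $\Gamma(H)\cong U\cap x^{-1}Ux$. The main technical point to keep in view throughout part (4) is the translation between identities in $M$ and in $G$: elements obtained by inversion in $G$ must be verified to belong back to $M$, which causes no trouble here since the elements in question are always units of $M$.
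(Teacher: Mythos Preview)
Your proof is correct and follows essentially the same line as the paper's. The only noticeable difference is in part (4): where the paper invokes Green's Lemma to pass from $xs\in H_x$ to $H_xs\subseteq H_x$, you instead verify this directly using the description $H_x=Ux\cap xU$ from part (3), which is a perfectly good (and slightly more self-contained) alternative.
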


\begin{proof}
(1) $x\,\L\,y$ holds if and only if $y=ux$ and $x=vy$ for some $u,v\in M$. But then $x=vux$, and by right cancellativity of $M$ it follows that $vu=1$. Similarly, $y=uvy$ 
implies $uv=1$, so both $u,v$ must be units of $M$. (2) is dual to (1), and (3) follows from (1) and (2). 

Assume now that $M$ is a submonoid of a group $G$; we compute the right \Sch group of $H_x$. Indeed, $m\in M$ stabilises $H_x$ from the right if and only if $H_xm\subseteq H_x$.
In particular, $xm\,\R\,x$, which by (2) implies $m\in U$. Furthermore, we must have $xm\,\L\,x$, so $xm=ux$ for some $u\in U$. This, in $G$, yields $m=x^{-1}ux\in x^{-1}Ux$.
Conversely, the assumption $m\in U\cap x^{-1}Ux$ implies that $xm\in xU\cap xx^{-1}Ux=xU\cap Ux=H_x$.
Since $xm \in H_x$ it follows by Green's Lemma \cite{How} that $H_x m=H_x$.  
So, $\Stab(H_x)=U\cap x^{-1}Ux$. However, the (right) \Sch group of $H_x$ must coincide now with its right stabiliser, as left cancellativity of $M$ implies that its every
element must induce a different permutation of $H_x$. 
\end{proof}

\begin{pro}\label{pro:general2}
Let $M$ be a right (or left) cancellative monoid and let $U$ be its group of units.
\begin{itemize}
\item[(1)] $M\setminus U$ is an ideal of $M$. Consequently, if $M$ is finitely generated (resp.\ recursively presented), so is $U$.
\item[(2)] Every \Sch group of $M$ embeds into $U$.
\item[(3)] If $M$ is recursively presented then every \Sch group of $M$ is a recursively enumerable subgroup of a finitely presented group. 
\end{itemize}
\end{pro}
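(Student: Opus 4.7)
The plan is to handle (1), (2), (3) in sequence, with (3) built on (1), (2), and Higman's Embedding Theorem. For (1), I would first record two elementary lemmas: in a right (or left) cancellative monoid the only idempotent is $1$ (since $e^2 = e\cdot 1$ admits the one-sided cancellation), and one-sided invertibility is automatically two-sided (from $mp=1$ one gets $(pm)(pm) = p(mp)m = pm$, so $pm$ is idempotent, hence $pm=1$). If $mn \in U$, then $m\cdot(n(mn)^{-1}) = 1$ makes $m$ right-invertible, so $m \in U$, whence $n = m^{-1}(mn) \in U$; the case $nm\in U$ is symmetric. Thus $M\setminus U$ is a two-sided ideal, which for a finite generating set $A$ of $M$ forces every factorisation $u = a_1\cdots a_k\in U$ to use letters only from the finite set $B := A\cap U$, making $U$ finitely generated as a group. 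Recursive presentation of $U$ follows by taking as relator set $R = \{w\in\ol{B}^* : w =_U 1\}$: fixing, for each $b\in B$, any word $\alpha(b)\in A^*$ with $b\,\alpha(b) =_M 1 =_M \alpha(b)\,b$, the test $w =_U 1$ reduces via substitution of $\alpha(b)$ for $b^{-1}$ to the r.e.\ word problem of $M$, so $R$ is r.e.

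For (2), I would exploit the fact (stated in the preliminaries) that the left and right \Sch groups of an $\H$-class are isomorphic, and work on whichever side the cancellativity hypothesis permits. In the left cancellative case, fix $H$ and $x\in H$; for $s\in\Stab(H)$, Proposition~\ref{pro:general}(2) gives $R_x = xU$, so from $xs\in H \subseteq xU$ we get $xs = xu$ with $u\in U$, which by left cancellation yields $s = u\in U$. Closure of $\Stab(H)$ under inverses taken in $U$ follows from $\rho_s|_H$ being a permutation of $H$ whose set-theoretic inverse is $\rho_{s^{-1}}|_H$, so $\Stab(H)$ is a subgroup of $U$. The defining congruence $\sigma$ is trivial because $xs = xt$ left-cancels to $s = t$. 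Hence $\Gamma(H)\cong\Stab(H) \le U$. The right cancellative case is handled dually by passing to the left \Sch group $\Stab_L(H)/\sigma_L$ and applying Proposition~\ref{pro:general}(1) with right cancellation.

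For (3), combine (1), (2), and Higman: by (1), $U$ is finitely generated and recursively presented and hence embeds into a finitely presented group $\wh{G}$; by (2), $\Gamma(H) \le U \le \wh{G}$. What remains is to realise $\Gamma(H)$ as the value set of an r.e.\ language over the alphabet of $\wh{G}$. I would first verify that $\Stab(H)$ (or $\Stab_L(H)$) is an r.e.\ subset of $M$ when described by words over $A$: with $x\in A^*$ fixed for a chosen element of $H$, the predicate $sx\;\H\;x$ unfolds as a conjunction of existential statements over $A^*$ (expressing $sx\,\R\,x$ and $sx\,\L\,x$), all r.e.\ from the word problem of $M$. By the ideal argument in (1) these representatives already lie in $(A\cap U)^*$, giving an r.e.\ language over the generators of $U$. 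A fixed substitution $b\mapsto\tau(b)$, with $\tau(b)$ any word over the alphabet of $\wh{G}$ representing the image of $b$ under the Higman embedding, then converts this into an r.e.\ language over the alphabet of $\wh{G}$ whose evaluations are exactly $\Gamma(H)$. The main obstacle I anticipate lies in this last step: uniformly propagating recursive enumerability through the chain $\Stab(H) \subseteq U \hookrightarrow \wh{G}$, relying only on the existence (not an explicit description) of the Higman embedding while preserving effectiveness of the substitution. The arguments for (1) and (2) are otherwise routine cancellative-monoid manipulations.
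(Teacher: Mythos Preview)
Your proposal is correct and follows essentially the same route as the paper's proof: the ideal argument for (1), the dual left/right stabiliser argument using Proposition~\ref{pro:general} for (2), and for (3) the combination of (1), (2), Higman, and the observation that membership in the stabiliser is an r.e.\ predicate over the word problem of $M$. The obstacle you anticipate in the final step is not genuine: the embedding $U\hookrightarrow\wh G$ is specified by a \emph{fixed finite} list of words $\tau(b)\in\ol{C}^*$ (one per $b\in B$), and substitution by a fixed finite map trivially preserves recursive enumerability---you are not asked to compute $\tau$ uniformly from the presentation of $M$, only to know that some such finite $\tau$ exists, which Higman guarantees.
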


\begin{proof}
We consider only the case when $M$ is right cancellative, the left cancellative case being dual.

(1) Assume that $x,y\in M$ are such that $xy\in U$. Then $xyz=zxy=1$ for some $z\in M$. Hence, $yzxy=y$ which by right cancellativity implies $yzx=1$. So, $zx$ is an inverse 
of $y$ and $yz$ is an inverse of $x$, thus $x,y\in U$. Therefore, if any of $a,b\in M$ does not belong to $U$ it follows that $ab\in M\setminus U$, showing that $M\setminus U$ 
is an ideal of $M$.

Now if $X\subseteq M$ is any generating set of $M$ it follows that $X\cap U$ must be a (monoid) generating set of $U$: indeed, the previous paragraph shows that if for $u\in U$ 
we have $u=x_1\dots x_n$ for some $x_1,\dots,x_n\in X$ then in fact we must have $x_1,\dots,x_n\in X\cap U$. So, if $X$ is finite (witnessing that $M$ is finitely generated), 
so is $X\cap U$, thus $U$ is a finitely generated group.

Finally, assume that $M$ is recursively presented. As just shown, this implies that the group $U$ is finitely generated, and it embeds, together with the whole $M$, into a 
finitely presented monoid \cite{Mur}. So, as a monoid, $U$ is recursively presented, say $U=\Mon\pre{A}{u_i=v_i\; (i\in I)}$. Since $U$ is a group, the group presentation 
$\Gp\pre{A}{u_iv_i^{-1}=1\; (i\in I)}$ also defines $U$. Here, $\{u_iv_i^{-1}:\ i\in I\}$ is a r.e.\ language, showing $U$ to be a recursively presented group.
 
(2) Here we use the fact (already mentioned in the preliminary section) that the (right) \Sch group of an $\H$-class is isomorphic to the left \Sch group of that $\H$-class. 
Since $M$ is right cancellative, if $H$ is an $\H$-class of $M$, the condition $uH=H$ for some $u\in M$, implies, by Proposition \ref{pro:general} (1), that $u$ is a unit of $M$. 
Furthermore, if $u'\neq u$ is also a unit of $M$ then for any $h\in H$ we have $uh\neq u'h$ because of the right cancellative property. Hence, every element $u$ of the left stabiliser 
of $H$ induces (via the left translation $\lambda_u:h\mapsto uh$, $h\in H$) a different permutation of $H$, implying that it coincides with the left \Sch group of $H$ and embeds into $U$.

(3) Let $H$ be an $\H$-class of $M=\Mon\pre{A}{u_i=v_i\; (i\in I)}$. By (2), the \Sch group $K$ of $H$ embeds into $U$. By (1), the group of units $U$ is recursively presented, say 
$U=\Gp\pre{A\cap U}{\mathfrak{R}}$, and so it embeds into a finitely presented group. By combining these two facts, we obtain that $K$ is a subgroup of a finitely presented group 
$G=\Gp\pre{B}{\mathfrak{R}'}$; in this sense we aim to show that $K$ is a recursively enumerable subgroup of a finitely presented group. 

Assume that $H=H_x$ for some $x\in M$, and let $w\in A^*$ be any word such that $[w]_M=x$. Also, since $U$ embeds into $G$, there is a mapping which assigns to each letter 
$a\in A\cap U$ a word $w_a\in\ol{B}^*$ inducing the embedding $U\to G$ from the previous paragraph, so that $[a_1\dots a_m]_U$ is mapped to $[w_{a_1}\dots w_{a_m}]_G$ for all 
$a_1,\dots,a_m\in A\cap U$. Now note that for $u\in A^*$, $[u]_M\in K$ if and only if $[u]_M$ is a unit of $M$ and there exist words $p,q\in A^*$ such that  $[uw]_M=[wp]_M$ and 
$[wpq]_M=[w]_M$. The latter two equalities ensure that $[u]_M x\,\R\,x$ and so $[u]_M x\,\H\,x$ (as we have $[u]_M x\,\L\,x$ for granted, by Proposition \ref{pro:general} (1)), 
and by Green's Lemma (see e.g.\ \cite[Lemma 2.2.1 and its dual, Lemma 2.2.2]{How}) the condition $[u]_M x\in H_x=H$ is equivalent to $[u]_M H=H$.

Therefore, (3) will be proved as soon as we construct a Turing machine $\mathcal{M}$ that halts on input $u\in A^*$ if and only if the word $u$ has the properties described in 
the previous paragraph, for then the language
$$
\{w_{a_1}\dots w_{a_m}:\ a_1\dots a_m\in L(\mathcal{M})\}\subseteq\ol{B}^*,
$$
whose words represent the elements of the image of $K$ under the considered embedding $U\to G$, is clearly r.e. In more detail, $\mathcal{M}$ should ``detect'' the words $u\in A^*$ 
such that the equalities 
$$
[uv]_M=[vu]_M=1, \quad [uw]_M=[wp]_M, \quad [wpq]_M=[w]_M
$$
hold for some $v,p,q\in A^*$.

Since $M$ is a recursively presented monoid, its word problem, that is, the set $\{(u,v)\in A^*\times A^*:\ [u]_M=[v]_M\}$, is well-known to be r.e. (For groups this is already contained 
in the results of Higman \cite{Hig}, and for monoids this is a consequence of the work of Murski\u{\i} \cite{Mur}.) Hence, there exists a Turing machine $\mathcal{M}_M$ whose language 
is the word problem of $M$. Use some of the standard enumerations of triples of all words over $\ol{A}$ to obtain $\ol{A}^*\times\ol{A}^*\times\ol{A}^*=\{(v_n,p_n,q_n):\ n\in\N\}$.  
Then, use countably infinitely many copies $\mathcal{M}_n$, $n\in\N$, of the machine $\mathcal{M}_M$ and feed the inputs $(uv_i,1)$, $(v_iu,1)$, $(uw,wp_i)$ and $(wp_iq_i,w)$ into 
the machines $\mathcal{M}_{4i}$, $\mathcal{M}_{4i+1}$, $\mathcal{M}_{4i+2}$, $\mathcal{M}_{4i+3}$, $i\in\N$, respectively, so that the resulting machine first performs the first step 
of machine $\mathcal{M}_0$, then the first step of machine $\mathcal{M}_1$ and the second step of machine $\mathcal{M}_0$, and so on. Such a machine $\mathcal{M}$ will halt and return 
a positive answer if and only if the word $u$ represents (in $M$) an element of $K$, the required \Sch group of $H$. 
\end{proof}

\begin{thm}\label{thm:units}
A group arises as a group of units of a prefix monoid if and only if it is a recursively presented group.
\end{thm}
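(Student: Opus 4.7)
The plan is to split the biconditional and invoke results already established. For the forward direction, suppose $G$ is the group of units of some prefix monoid $P$. By Corollary~\ref{cor:prefix-Hig}, $P$ is finitely generated, \gem and recursively presented; in particular it is right cancellative. Proposition~\ref{pro:general2}(1) then yields immediately that $G$ is finitely generated and recursively presented, as required.

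For the converse, let $U$ be a finitely generated recursively presented group. Viewed as a monoid, $U$ is trivially \gem (embedding into itself) and still recursively presented: any recursive group presentation of $U$, augmented by the relations $aa^{-1} = a^{-1}a = 1$ for each generator $a$, yields a recursive monoid presentation. Hence Theorem~\ref{thm:almost-all} applies and produces a natural number $\mu_U$ such that $U \ast \Sigma_k^*$ is a prefix monoid for every $k \geq \mu_U$. I would fix any such $k$ and propose $U \ast \Sigma_k^*$ as the prefix monoid realising $U$ as its group of units; it then remains to verify that its group of units is indeed $U$.

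For this last step I would use the monoid homomorphism $\pi \colon U \ast \Sigma_k^* \to \Sigma_k^*$ that is trivial on $U$ and is the identity on $\Sigma_k^*$, provided by the universal property of free products of monoids. If $m \in U \ast \Sigma_k^*$ is a unit then $\pi(m)$ is a unit of $\Sigma_k^*$; since $\Sigma_k^*$ has only the trivial unit, $\pi(m)=1$. By the normal form theorem for free products of monoids, $\pi^{-1}(1)$ is exactly the image of $U$ inside the free product, so $m \in U$; and conversely every element of $U$ is a unit. The only genuine subtlety in the argument is this final identification of the group of units with $U$, but it is a routine application of the standard normal form theorem, so no new techniques beyond those of the preceding sections are required.
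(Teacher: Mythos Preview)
Your proof is correct and follows essentially the same route as the paper: both directions invoke Proposition~\ref{pro:general2}(1) and Theorem~\ref{thm:almost-all} in the same way, with the paper simply asserting that $G$ is the group of units of $G\ast\Sigma_k^*$ while you supply the (routine) verification via the projection $\pi$.
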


\begin{proof}
By (1) of the previous proposition, it is immediate that the group of units of any prefix monoid is recursively presented.

Conversely, let $G=\Gp\pre{A}{\mathfrak{R}}$ be a recursively presented group. Then $G=\Mon\pre{A\cup A'}{\mathfrak{R}',\; aa'=a'a=1\; (a\in A)}$ is a monoid presentation for $G$, where 
$A'$ is a bijective copy of $A$ disjoint form $A$ and the relations from $\mathfrak{R}'$ are obtained from $\mathfrak{R}$ by replacing each $a^{-1}$ by $a'$, $a\in A$. This shows that $G$, 
as a monoid, is recursively presented. By Theorem \ref{thm:almost-all}, there is an integer $k\geq 0$ such that $G*\Sigma_k^*$ is a prefix monoid. It remains to note that $G$ is the group of units 
of $G*\Sigma_k^*$.
\end{proof}

The main technical tool for studying \Sch groups in various classes of \gem finitely generated monoids is the following.  

\begin{thm}\label{thm:wHig}
Let $\cK$ be a class of \gem recursively presented monoids such that for any \gem recursively presented monoid $M$ and any embedding $\alpha:M\to \Gamma$ into a finitely presented group $\Gamma$ there 
exist $N_M\in\cK$, a finitely presented group $\Gamma_1$, monoid embeddings $\beta:M\to N_M$ and $\alpha':N_M\to \Gamma_1$, and a group embedding $\beta':\Gamma\to\Gamma_1$ 
such that the following diagram commutes:
$$
\xymatrix@C+2pc{M \ar[r]^{\beta} \ar[d]_{\alpha} & N_M \ar[d]^{\alpha'} \\
\Gamma \ar[r]_{\beta'} & \Gamma_1}
$$
and that the restriction $\beta\restriction_{U_M}$ to the group of units of $M$ is an isomorphism between $U_M$ and $U_{N_M}$, the group of units of $N_M$.

Then for any group $K$ that is a recursively enumerable subgroup of a finitely presented group there is a monoid $S\in\cK$ such that $K$ is isomorphic to the \Sch group of an $\H$-class of $S$.
\end{thm}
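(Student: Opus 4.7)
The plan is to construct a specific finitely generated, \gem, recursively presented monoid $M_0$ in which the \Sch group of a designated $\H$-class is isomorphic to $K$, and then to push this structure into the class $\cK$ via the hypothesis of the theorem. The central tool is Proposition~\ref{pro:general}(4): for a submonoid of a group, the \Sch group of $H_x$ is $U \cap x^{-1}Ux$. So the task reduces to producing a monoid whose group of units $U$ and a designated element $x$ satisfy $U \cap x^{-1}Ux \cong K$.

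Fix a finite presentation $\Gamma = \Gp\pre{B}{\mathfrak{R}}$ and an r.e.\ language $L \subseteq \ol{B}^*$ with $K = \{[w]_\Gamma : w \in L\}$. Consider the HNN extension
\[
\tilde\Gamma = \Gp\pre{B, t}{\mathfrak{R},\; t^{-1}wt = w \; (w \in L)},
\]
which is a recursively presented group whose single stable letter $t$ has $K$ as both associated subgroups (with identity isomorphism). Britton's Lemma, applicable in this generality regardless of whether $K$ is finitely generated, gives $t^{-1}\Gamma t \cap \Gamma = K$ in $\tilde\Gamma$. I would take $M_0$ to be the submonoid of $\tilde\Gamma$ generated by the finite set $B \cup B^{-1} \cup \{t\}$. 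Britton normal forms show each element of $M_0$ has the shape $g_0 t g_1 t \cdots t g_n$ with $g_i\in\Gamma$ and only positive powers of $t$; such an element is a unit of $M_0$ precisely when $n=0$, whence $U_{M_0}=\Gamma$. Proposition~\ref{pro:general}(4) then yields that the \Sch group of $H_t$ in $M_0$ equals $\Gamma \cap t^{-1}\Gamma t = K$. The monoid $M_0$ is finitely generated, embeds in the group $\tilde\Gamma$, and inherits an r.e.\ word problem from it, so it is recursively presented.

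Using the Higman Embedding Theorem, embed $\tilde\Gamma$ into a finitely presented group $\Gamma_0$, which gives an embedding $\alpha: M_0 \hookrightarrow \Gamma_0$. Apply the hypothesis of the theorem to $M = M_0$ and this $\alpha$ to obtain $N = N_{M_0}\in\cK$, a finitely presented group $\Gamma_1$, monoid embeddings $\beta: M_0\to N$ and $\alpha': N\to\Gamma_1$, and a group embedding $\beta': \Gamma_0\to\Gamma_1$ with $\alpha'\beta = \beta'\alpha$, where $\beta\restriction_{U_{M_0}}$ is an isomorphism onto $U_N$. Proposition~\ref{pro:general}(4) applied to the embedding $\alpha': N\hookrightarrow\Gamma_1$ then identifies the \Sch group of $H_{\beta(t)}$ in $N$ with
\[
\alpha'(U_N) \cap \alpha'(\beta(t))^{-1}\alpha'(U_N)\alpha'(\beta(t)) \;=\; \beta'\bigl(\alpha(U_{M_0}) \cap \alpha(t)^{-1}\alpha(U_{M_0})\alpha(t)\bigr) \;\cong\; K,
\]
where the equality uses $\alpha'\beta = \beta'\alpha$ together with the iso $\beta\restriction_{U_{M_0}}$ (which gives $\alpha'(U_N) = \beta'(\alpha(U_{M_0}))$) and injectivity of $\beta'$; the last isomorphism follows from injectivity of $\beta'\circ\alpha$ combined with the \Sch identity already established for $M_0$. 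Thus the \Sch group of $H_{\beta(t)}$ in $N$ is isomorphic to $K$, as required.

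The main obstacle is verifying the intrinsic structure of $M_0$---in particular computing its group of units and its \Sch group---through Britton's Lemma for an HNN extension whose associated subgroup is possibly infinitely generated; the standard normal form theory carries over without change. A related subtlety is that $U \cap x^{-1}Ux$ is superficially computed in the ambient group, but since it coincides with the intrinsic \Sch group of the monoid it is preserved under any subgroup embedding of ambient groups, which legitimises the transfer from $\tilde\Gamma$ to $\Gamma_0$ and then into $\Gamma_1$ in the final calculation.
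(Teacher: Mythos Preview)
Your proposal is correct and follows essentially the same route as the paper: build the HNN extension of the finitely presented group $\Gamma$ over the r.e.\ subgroup $K$, take the submonoid generated by $\Gamma$ and the stable letter, use Britton's Lemma to identify its group of units with $\Gamma$ and Proposition~\ref{pro:general}(4) to identify the \Sch group of $H_t$ with $\Gamma\cap t^{-1}\Gamma t=K$, Higman-embed, and then push everything into $\cK$ via the commuting square. The only cosmetic difference is that the paper performs the Higman embedding before forming the submonoid $M$ (so $M$ sits inside the finitely presented group $G$ from the outset), whereas you form $M_0$ inside $\tilde\Gamma$ first and then Higman-embed; the two are equivalent and your transfer computation via $\alpha'\beta=\beta'\alpha$ is exactly the paper's final paragraph.
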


\begin{proof}
Let $H_1=\Gp\pre{A}{\mathfrak{R}}$ be a finitely presented group such that $K$ embeds into $H_1$ so that the image of this embedding, $K_1=\{[w]_{H_1}:\ w\in L\}$ (for some r.e.\ language $L$), is a recursively enumerable subgroup of $H_1$. 
By \cite[Lemma 5.2]{GK} there is a finitely presented group containing two conjugate subgroups, both isomorphic to $H_1$, such that their intersection is isomorphic to $K_1\cong K$. 
In more detail, let $G_0$ be the HNN extension of $H_1$ with a stable letter $t$, conjugating each element of $K_1$ to itself. 
A presentation for this group is $\Gp\pre{A,t}{\mathfrak{R}\cup\mathfrak{R}_0}$, where $\mathfrak{R}_0=\{t^{-1}wtw^{-1}:\ w\in L\}$. Here $\mathfrak{R}_0$ is clearly a r.e.\ language, so $G_0$ is finitely generated and recursively presented. 
By the Higman Embedding Theorem, it embeds into some finitely presented group $G$.
Within $G$ we have $H_2=[t]_G^{-1} H_1 [t]_G \cong H_1$ and $H_1\cap H_2=K_1$.

Now let $M$ be the submonoid of $G$ generated by $H_1$ and $[t]_G$. This is a finitely generated submonoid of $G$, a finitely presented group, so $M$ is a \gem recursively presented monoid.
A generating set of $M$ is $\{[a]_G,[a]_G^{-1}:\ a\in A\}\cup\{[t]_G\}$. 
Let us pause for a moment to analyse the group of units of $M$. 
Indeed, for a word $w\in(\ol{A}\cup\{t\})^*$ we have that $[w]_G$ is invertible in $M$ if and only if $[w]_G,[w]_G^{-1}\in M$. Let $w$ be one such word. Then we can write 
$$
[w]_G = [h_0th_1t\dots th_\ell]_G
$$
for some $\ell\geq 0$ and words $h_i\in\ol{A}^*$, $0\leq i\leq\ell$, such that $[h_i]_G\in H_1$.
Note that the word $h_0th_1t\dots th_\ell$ is already in a reduced form with respect to the HNN-extension structure of $G_0$ (see \cite[p.181]{LSch}).
Hence,
$$
[w]_G^{-1} = [h_\ell^{-1}t^{-1}\dots t^{-1}h_1^{-1}t^{-1}h_0^{-1}]_G.
$$
The word on the right hand side is also already in a reduced form. However, the condition $[w]_G^{-1}\in M$ and the very definition of $M$ implies that there must be
words $h_0',h_1',\dots,h_p'\in\ol{A}^*$ with $[h_j']_G\in H_1$ for all $0\leq j\leq p$ such that 
$$
[h_\ell^{-1}t^{-1}\dots t^{-1}h_1^{-1}t^{-1}h_0^{-1}]_G = [h_0'th_1't\dots th_p']_G.
$$
This is now an equality of two reduced forms in the HNN-extension $G_0$ and by \cite[Lemma 6.2]{DG} it is impossible unless $\ell=p=0$ and $[h_0']_G=[h_0]_G^{-1}\in H_1$.
On the other hand, it is clear that any element of $H_1$ is a unit in $M$, so $U_M=H_1$.

Next, start with $M$ and its inclusion map into $G$. By the condition of the theorem, there is a monoid $S=N_M\in\cK$, a finitely presented group $G_1$ and embeddings 
$\phi,\psi,\xi$ such that the diagram 
$$
\xymatrix@C+2pc{M \ar[r]^{\phi} \ar[d]_{\subseteq} & S \ar[d]^{\psi} \\
G \ar[r]_{\xi} & G_1}
$$
commutes, and the corresponding restriction of $\phi$ induces an isomorphism between the group of units $U_M$ and $U_S$. By Proposition \ref{pro:general} (4), the \Sch group of
$H_{[t]_G}$ is isomorphic to $U_M\cap [t]_G^{-1}U_M[t]_G = H_1\cap H_2\cong K$. Applying Proposition \ref{pro:general} (4) to $H_{\mathbf{t}}$ in $S$, where $\mathbf{t}=[t]_G\phi$, the \Sch group of $H_{\mathbf{t}}$ is isomorphic to 
$$
U_S\psi\cap (\mathbf{t}^{-1}U_S\mathbf{t})\psi.
$$
However, since $U_S=U_M\phi$ and $\phi\psi=\xi\restriction_M$, the latter group is just $K_1\xi$, an isomorphic copy of $K$.
\end{proof}

\begin{thm}\label{thm:Sch-prefix}
The \Sch groups of prefix monoids are exactly the recursively enumerable subgroups of finitely presented groups.  
\end{thm}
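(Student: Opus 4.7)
The plan is to prove the two inclusions separately: the forward one is immediate from results in the excerpt, while the reverse one will be obtained by applying Theorem~\ref{thm:wHig} to the class $\cK$ of all prefix monoids.

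For the forward inclusion, let $P$ be a prefix monoid. By Corollary~\ref{cor:prefix-Hig}, $P$ is a \gem recursively presented monoid and therefore is right cancellative; Proposition~\ref{pro:general2}(3) then says precisely that every \Sch group of $P$ is a recursively enumerable subgroup of a finitely presented group.

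For the reverse inclusion, set $\cK$ to be the class of all prefix monoids. It suffices to verify the hypothesis of Theorem~\ref{thm:wHig} for $\cK$. So fix an arbitrary \gem recursively presented monoid $M$ together with an embedding $\alpha \colon M \to \Gamma$ into a finitely presented group $\Gamma$. Guided by Theorem~\ref{thm:almost-all}, the plan is to take
\[
N_M := M \ast \Sigma_{\mu_M}^*, \qquad \Gamma_1 := \Gamma \ast FG_{\mu_M},
\]
so that $N_M$ is a prefix monoid (by Theorem~\ref{thm:almost-all}) and $\Gamma_1$ is a finitely presented group. Let $\beta \colon M \to N_M$ and $\beta' \colon \Gamma \to \Gamma_1$ be the canonical inclusions into the first free-product factor, and let $\alpha' \colon N_M \to \Gamma_1$ extend $\alpha$ on the $M$-factor while sending the $\mu_M$ free monoid generators of $\Sigma_{\mu_M}^*$ to the corresponding free generators of $FG_{\mu_M}$. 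Commutativity of the required square is immediate, and $\alpha'$ is injective by the Normal Form Theorem for free products, using that both $\alpha$ and the standard inclusion $\Sigma_{\mu_M}^* \hookrightarrow FG_{\mu_M}$ are injective.

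The remaining, and main, point to verify is that $\beta$ induces an isomorphism $U_M \cong U_{N_M}$, i.e.\ that no unit of $N_M = M \ast \Sigma_{\mu_M}^*$ lies outside $M$. This I expect to be the key technical step, although it should follow cleanly from the free-product structure: if a candidate unit of $N_M$ had free-product normal form containing a non-trivial syllable from $\Sigma_{\mu_M}^*$, then viewing it together with its inverse inside $\Gamma_1 = \Gamma \ast FG_{\mu_M}$ and reading off the inverse's normal form would require that syllable to have an inverse still lying in $\Sigma_{\mu_M}^*$, which is impossible for any non-identity element of a free monoid. Hence the unit must lie in the $M$-factor, and then, being invertible in $\Gamma$, it must already be a unit of $M$. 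With this verification complete, Theorem~\ref{thm:wHig} delivers, for every recursively enumerable subgroup $K$ of a finitely presented group, a prefix monoid whose \Sch group is isomorphic to $K$, finishing the proof.
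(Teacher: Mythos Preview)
Your approach is essentially the paper's: both directions are handled identically, invoking Corollary~\ref{cor:prefix-Hig} with Proposition~\ref{pro:general2}(3) for the forward inclusion and verifying the hypothesis of Theorem~\ref{thm:wHig} for $\cK=\{\text{prefix monoids}\}$ via Theorem~\ref{thm:almost-all} for the converse. The only packaging difference is that the paper revisits the explicit construction inside the proof of Theorem~\ref{thm:almost-all} (so $\Gamma_1=\Gamma\ast FG_2$ and $\beta'$ is conjugation by $t$), whereas you invoke the statement of Theorem~\ref{thm:almost-all} as a black box and take $\Gamma_1=\Gamma\ast FG_{\mu_M}$ with the canonical inclusions. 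Your variant is perfectly fine and arguably cleaner; the injectivity of $\alpha'=\alpha\ast\iota$ does follow from the Normal Form Theorem as you say, since $\alpha$ is injective so non-identity $M$-syllables map to non-identity $\Gamma$-syllables.

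There is, however, one genuine slip in your verification that $\beta$ restricts to an isomorphism $U_M\cong U_{N_M}$. Your final clause ``being invertible in $\Gamma$, it must already be a unit of $M$'' is a non-sequitur: every element of $M$ is invertible in the group $\Gamma$, so this proves nothing. What you need is that once your syllable argument forces a unit $x\in U_{N_M}$ to lie in the $M$-factor, the \emph{same} argument applied to its inverse $y\in U_{N_M}$ forces $y\in M$ as well; then $xy=yx=1$ holds in $N_M$ and hence in the submonoid $M$, giving $x\in U_M$. (Alternatively, kill $\Gamma$ in $\Gamma_1$ and map each free generator to $1\in\mathbb{Z}$: the image of any element of $N_M$ is non-negative, so a unit and its inverse both map to $0$, forcing both into $M$.) With this correction the argument is complete.
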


\begin{proof}
($\Ra$) 
By Corollary \ref{cor:prefix-Hig}, if $M$ is a prefix monoid then it is \gem (and so right cancellative) and recursively presented. Hence by Proposition \ref{pro:general2} (3) every \Sch group of $M$ is a recursively enumerable subgroup of a finitely presented group.

($\La$) 
In Theorem \ref{thm:wHig}, set $\cK$ to be the class of prefix monoids. The first part of the proof of Theorem \ref{thm:almost-all} shows that for any \gem recursively presented monoid $M$ and any embedding $M\to G$ 
into a finitely presented group $G$, there exists a finite set $C$ such that $M*C^*$ is isomorphic to a prefix monoid in $G*FG_2$. Since the group of units of $M*C^*$ is 
the same as that of $M$, the natural embedding $M\to M*C^*$, the isomorphism between $M*C^*$ and a prefix monoid of $G_1=G*FG_2$ as in the proof of Theorem \ref{thm:almost-all}, 
and the group embedding $G\to G_1$ defined by 
$$
g\mapsto [t]_{G_1}g[t]_{G_1}^{-1}
$$
for all $g\in G$, provide us with all the required parameters for the application of Theorem \ref{thm:wHig}. It now yields that any recursively enumerable subgroup of a finitely presented group occurs as a \Sch group of a prefix monoid.
\end{proof}


\section{On RU-monoids}

In the previous two sections we studied prefix monoids, giving a characterisation of those monoids that arise as prefix monoids, and of their groups of units and \Sch groups. 
In this section we start by collecting a few basic facts about the submonoids of right units of finitely presented special inverse monoids which, as mentioned above, we call the RU-monoids;
subsequently, we provide a characterisation of groups arising as \Sch groups of RU-monoids. While prefix monoids are all group-embeddable, for RU-monoids we need to work with the 
wider class of right cancellative monoids.  

\begin{lem}\label{lem:ru-hig}
Every RU-monoid is a right cancellative recursively presented monoid.
\end{lem}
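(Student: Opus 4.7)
The plan is to verify the two assertions separately, noting that right cancellativity was essentially already observed in the preliminaries and only needs to be restated, while the substantive content is the recursive presentability.

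First I would record right cancellativity: if $R$ is the RU-monoid of a special inverse monoid $M$ and $a,b,c\in R$ with $ac=bc$, then choosing $x\in M$ with $cx=1$ yields $a=acx=bcx=b$. This argument is already given in the discussion following the definition of RU-monoid in the preliminaries, so I would simply invoke it.

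For recursive presentability, let $M=\Inv\pre{A}{w_i=1\;(i\in I)}$ be a finite special inverse presentation, and let $R$ be its RU-monoid. By the observation following the definition of RU-monoid (via \cite[Proposition 4.2]{IMM}), $R$ is generated as a monoid by the finite set $P$ consisting of the elements $[p]_M$ with $p\in\pref(w_i)$ for some $i\in I$. Introduce a finite alphabet $B=\{b_p:p\in P\}$ in bijection with $P$, together with the homomorphism $\pi:B^*\to R$ sending each $b_p$ to $[p]_M$. Then $R\cong B^*/\ker(\pi)$, and to exhibit a recursive presentation it suffices to show that $\ker(\pi)$ is a recursively enumerable subset of $B^*\times B^*$.

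The key step is: for words $u=b_{p_1}\cdots b_{p_k}$ and $v=b_{q_1}\cdots b_{q_\ell}$ in $B^*$, we have $(u,v)\in\ker(\pi)$ if and only if $[p_1\cdots p_k]_M=[q_1\cdots q_\ell]_M$ in the inverse monoid $M$. Since $M$ is finitely presented as an inverse monoid, its word problem is recursively enumerable: one can enumerate by brute force all pairs derivable from the Wagner relations and the finitely many defining relations $w_i=1$. Composing the computable substitution $b_p\mapsto p$ with a Turing machine enumerating the word problem of $M$ yields a Turing machine enumerating $\ker(\pi)$, so $R$ is recursively presented on the finite generating set $B$.

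The only point one needs to handle with a little care is the recursive enumerability of the word problem of a finitely presented inverse monoid; this is standard (it follows from the fact that the congruence on $FIM(A)$ generated by the relations can be enumerated by finitely many rewriting steps, combined with the existence of a solvable word problem for the free inverse monoid $FIM(A)$ via Munn trees \cite{Munn}), so I would cite it rather than reprove it. No further obstacle is expected.
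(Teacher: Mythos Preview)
Your proof is correct, and differs from the paper's only in how recursive presentability is established. The paper argues indirectly via Murski\u{\i}'s embedding theorem: it observes that the ambient inverse monoid $I=\Inv\pre{A}{\mathfrak{R}}$ is recursively presented \emph{as a monoid} (being $\ol{A}^*$ modulo the congruence generated by $\mathfrak{R}$ together with the Wagner relations), hence embeds into a finitely presented monoid, whence the finitely generated submonoid $R$ is recursively presented by the converse direction of Murski\u{\i}. You instead bypass Murski\u{\i} entirely by directly exhibiting $\ker(\pi)$ as a recursively enumerable set, reducing this to the recursive enumerability of the word problem of $M$. Your route is more elementary and self-contained; the paper's is shorter once Murski\u{\i} is on the table. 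Both rest on the same underlying observation, namely that a finitely presented special inverse monoid has r.e.\ word problem over $\ol{A}^*$.
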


\begin{proof}
First of all, every RU-monoid $R$ is finitely generated, namely by the elements represented by all prefixes of the relator words appearing in the presentation of the finitely 
presented special inverse monoid $I=\Inv\pre{A}{\mathfrak{R}}$, the monoid of right units of which is isomorphic to $R$. However, as a monoid, $I\cong \ol{A}^*/\theta_{\mathfrak{R}}$, 
where $\theta_{\mathfrak{R}}$ is the smallest congruence of the free monoid $\ol{A}^*$ containing $\mathfrak{R}$ and the Wagner congruence. We conclude that $I$ is recursively 
presented as a monoid, and thus it embeds (together with $R$) into a finitely presented monoid $M$. Therefore, $R$ is a recursively presented monoid.
\end{proof}

Analogously to the case of prefix monoids, the first question that arises following the previous lemma
is if all right cancellative recursively presented monoids arise as RU-monoids. (Shortly we shall 
see that there are two approaches to defining right cancellative monoids by means of presentations.) The answer is negative: namely, there is a parallel result to that of Lemma \ref{lem:pr-group}.

\begin{lem}\label{lem:ru-group}
If the monoid $R$ of right units of a finitely presented special inverse monoid $M=\Inv\pre{A}{w_i=1\; (i\in I)}$ is a group, then it is finitely presented.
\end{lem}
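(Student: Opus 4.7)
The plan is to follow the template of Lemma~\ref{lem:pr-group} adapted to the inverse-monoid setting, and in fact to show that under the hypothesis $R$ is isomorphic to the prefix monoid $P$ of the maximum group image $G=\Gp\pre{A}{w_i=1\; (i\in I)}$. Since $P$ is already known to be finitely presented by Lemma~\ref{lem:pr-group}, this will finish the proof.

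First I would check that under the hypothesis $R=U$, where $U$ is the group of units of $M$. Any $r\in R$ has an inverse $\bar{r}$ inside the group $R$; then both $r\bar{r}=\bar{r}r=1$ hold in $M$, and since inverses in the inverse monoid $M$ are unique, $\bar{r}$ must coincide with the inverse-monoid inverse $r^{-1}$, making $r$ a two-sided unit of $M$. Next, mimicking the proof of Lemma~\ref{lem:pr-group}, I would show that every letter appearing in a relator lies in $R$: if $a$ occurs in $w_i=w'aw''$, then the two adjacent prefixes $[w']_M$ and $[w'a]_M=[w']_M[a]_M$ belong to $R$, and since $R$ is a group (so $[w']_M$ is in fact a unit of $M$) one rearranges to obtain $[a]_M=[w']_M^{-1}[w'a]_M\in R$; the case when only $a^{-1}$ appears in a relator is handled by taking one further inverse. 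Let $B\subseteq A$ be the finite set of letters $b$ such that $b$ or $b^{-1}$ appears in some $w_i$. Since the generating prefixes of $R$ are words over $\ol{B}$ and each $[b]_M$ ($b\in B$) is a unit with $[b]_M^{-1}=[b^{-1}]_M$, this argument shows that $R$ coincides with the subgroup of $U$ generated by $\{[b]_M:b\in B\}$.

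The final step is to identify $R$ with $P$. By the proof of Lemma~\ref{lem:pr-group}, $P$ has the presentation $\Gp\pre{B}{w_i=1\; (i\in I)}$. I would construct a map $\phi\colon P\to R$ sending $[b]_P\mapsto[b]_M$: the free group map $FG(B)\to R$ determined by $b\mapsto[b]_M$ kills each $w_i$ (because $[w_i]_M=1$ in $M$) and hence factors through the normal closure of $\{w_i\}_{i\in I}$ in $FG(B)$, yielding a well-defined homomorphism $\phi$; by the description of $R$ just obtained, $\phi$ is surjective. Its inverse is provided by the canonical homomorphism $\sigma\colon M\to G$ restricted to $R$: indeed $\sigma|_R$ maps $R$ onto $P$ (it sends $[b]_M$ to $[b]_G=[b]_P$), and both composites agree with the identity on the generators, hence everywhere. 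Thus $R\cong P$ and we conclude.

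The main subtlety is that for a general inverse monoid the restriction of $\sigma$ to the group of units need not be injective, so one should not try to establish injectivity of $\sigma|_R$ directly through a delicate analysis of the $\sigma$-class of $1$. The plan above bypasses this by first constructing the explicit homomorphism $\phi$ going the opposite way using only the defining relations, and then noting that $\phi$ and $\sigma|_R$ are mutually inverse on the generators $\{[b]_M:b\in B\}$.
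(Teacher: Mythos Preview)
Your proof is correct and shares the paper's crucial step: once $R$ is a group, every letter occurring in a relator represents a unit of $M$, so $R$ is the subgroup generated by those letters. The endgame differs slightly. The paper stays inside the inverse-monoid world: having shown that each $a\in A'$ (the letters appearing in relators) is a unit, it observes that $\Inv\pre{A'}{w_i=1}$ is then a group, hence equals $\Gp\pre{A'}{w_i=1}$, and reads off $R$ from the free product decomposition $M\cong \Gp\pre{A'}{w_i=1}\ast FIM(A\setminus A')$. You instead pass to the maximal group image, invoke Lemma~\ref{lem:pr-group} to identify the prefix monoid $P$ with $\Gp\pre{B}{w_i=1}$, and then build an explicit inverse $\phi:P\to R$ to the natural map $\sigma|_R$. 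Your route has the pleasant by-product of showing that $\sigma|_R:R\to P$ is an isomorphism whenever $R$ is a group (a fact the paper does not isolate), and it reuses Lemma~\ref{lem:pr-group} rather than reproving its conclusion; the paper's route is more self-contained and avoids the detour through $G$. One small point worth making explicit in your write-up: before invoking Lemma~\ref{lem:pr-group} you should note that $P$ is a group because it is the image of the group $R$ under $\sigma$; you use this implicitly but do not state it.
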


\begin{proof}
First of all, recall that $R$ is the submonoid of $M$ generated by all of its elements of the form $[p]_M$ where $p$ is a prefix of a word $w_i$, $i\in I$. Let $a\in\ol{A}$ be any letter occurring in some $w_i$, so that $w_i=w'aw''$. Similarly as in the proof of Lemma \ref{lem:pr-group}, we have that  both $[w']_M$ and $[w'a]_M$ are values of prefixes of $w_i$ in $M$, so they belong to $R$. Since $R$ is a group, $[w']_M^{-1}\in R$. Therefore, $[w']_M^{-1}$ is a right unit of $M$, so $[w']_M^{-1}[w']_M=1$. This allows us to conclude that 
$$[a]_M=[w']_M^{-1}[w'a]_M \in R,$$
and so $[a]_M$ is a unit of $M$ for any letter $a\in A$ that appears in some $w_i$. 
If $A'\subseteq A$ denotes the set of all such letters, we obtain that the special inverse monoid $\Inv\pre{A'}{w_i=1\; (i\in I)}$ is a group and it coincides with the group with the same presentation, 
$G=\Gp\pre{A'}{w_i=1\; (i\in I)}$. Now $M\cong G * FIM(A\setminus A')$, where this free product is taken in the category of inverse monoids, from which it instantly follows that $R\cong G$, 
so $R$ is a finitely presented  group.
\end{proof}

The converse of this lemma is also true, as any finitely presented group is finitely presented as a special inverse monoid (by adding in the relations $aa^{-1} = 1$ and $a^{-1}a=1$ for all generators $a$) and, viewed this way, is the monoid of right units of itself. 

Now, while dealing with right cancellative monoids, there is yet another type of presentation that is worthwhile considering in this context. Namely, since right cancellative
monoids form a quasi-variety (defined by the implication $xz=yz \Ra x=y$), we have that for any monoid $M$ and any family $\{\rho_i:\ i\in I\}$ of its congruences
with the property that $M/\rho_i$ is right cancellative, the intersection $\rho$ of this family is again a congruence of $M$ such that $M/\rho$ is right cancellative.
For this reason, not unlike the greatest group image (of an inverse monoid, for example), there exists the greatest right cancellative image of a monoid. In particular,
if $M=\Mon\pre{A}{\mathfrak{R}}$, this greatest right cancellative image is denoted by $\MonRC\pre{A}{\mathfrak{R}}$, and it is defined as $A^*/\mathfrak{R}^{\mathrm{RC}}$, 
where $\mathfrak{R}^{\mathrm{RC}}$ is the intersection of all congruences $\sigma$ of $A^*$ with the property that $\sigma\supseteq\mathfrak{R}$ and $A^*/\sigma$ is 
right cancellative. So, a monoid $M=\Mon\pre{A}{u_i=v_i\; (i\in I)}$ is right cancellative (and there are conditions implying this, see e.g.\ \cite{Adj})
if and only if $\Mon\pre{A}{u_i=v_i\; (i\in I)}=\MonRC\pre{A}{u_i=v_i\; (i\in I)}$.

When both $A,\mathfrak{R}$ are finite, $M=\MonRC\pre{A}{\mathfrak{R}}$ is said to be \emph{finitely RC-presented}. The previous remark implies that when the finitely presented 
monoid $\Mon\pre{A}{\mathfrak{R}}$ happens to be right cancellative, it is also finitely RC-presented; the converse of this, however, is not true in general (cf.\ \cite{CRR}
and the references mentioned there). So, finitely RC-presented (right cancellative) monoids form a strictly wider class than that of finitely presented monoids which are
right cancellative. By the following result we include the former class into the scope of our considerations.

\begin{thm}\label{thm:rc-fin}
Every finitely RC-presented monoid $T=\MonRC\pre{A}{u_i=v_i\; (i\in I)}$ is an RU-monoid.
\end{thm}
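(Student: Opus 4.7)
The plan is to produce, from the finite RC-presentation of $T$, an explicit finite special inverse monoid $M=\Inv\pre{B}{\mathfrak{R}}$ together with an isomorphism between its RU-monoid $R$ and $T$. I would take $B=A\cup C$ for a finite set $C$ of auxiliary letters, and design the relator set $\mathfrak{R}$ so as to include (i) the relations $aa^{-1}=1$ for each $a\in A$, which make every positive $A$-word a right unit of $M$ by a straightforward induction using the identity $(ua)(ua)^{-1}=u(aa^{-1})u^{-1}=uu^{-1}$; (ii) enough relations to turn each auxiliary letter into a two-sided unit of $M$; and (iii) one ``special form'' relation per index $i\in I$, built from $u_i$, $v_i$ and auxiliary letters, whose effect, once the auxiliary letters are cancelled as units, is to force $[u_i]_M=[v_i]_M$ in $R$. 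A natural candidate for these latter relations is of the shape $s_iu_iv_i^{-1}t_i^{-1}=1$ with $s_i,t_i\in C$ invertible in $M$, though tuning the exact form may be required to avoid unwanted side-effects.

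Granting such an $M$, the identification $R\cong T$ proceeds as follows. The map $A^{\ast}\to M$ sends each letter of $A$ to a right unit, hence factors through $R$ to give a monoid homomorphism $\eta:A^{\ast}\to R$. By (iii) the relations $u_i=v_i$ hold in $R$, and since $R$ is always right cancellative and $T$ is the \emph{maximum} right cancellative quotient of $\Mon\pre{A}{u_i=v_i\;(i\in I)}$, the map $\eta$ factors through $T$ as $\bar\eta:T\to R$. By \cite[Proposition 4.2]{IMM}, $R$ is generated by the elements represented by the prefixes of the relators in $\mathfrak{R}$; a case analysis of these prefixes shows that, after absorbing the contribution of the units coming from $C$, they all lie in the image of $\bar\eta$, yielding surjectivity.

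The remaining task, and the heart of the proof, is to show that $\bar\eta$ is injective: no identifications of $A$-words are produced in $R$ beyond those already present in $T$. The most promising route is to exhibit a homomorphism $\rho$ from $M$ into an inverse monoid in which a copy of $T$ sits transparently, for instance the inverse hull of the regular right representation of $T$ on itself, within which $T$ naturally embeds as the $\R$-class of the identity, and then to verify that the restriction of $\rho$ to $R$ composes with $\bar\eta$ to the identity on $T$. A complementary, more combinatorial approach would be to analyse the \Sch graph of the $\R$-class of $1$ in $M$ via Stephen's iterative procedure and verify that its underlying combinatorial structure, after collapsing the auxiliary units, coincides with the right Cayley graph of $T$ with respect to the generating set $A$.

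The main obstacle lies squarely in this injectivity step. In contrast to the prefix monoid case of the previous sections, where the corresponding argument could be shifted into a finitely presented group via the Higman embedding theorem, here $T$ is only assumed to be right cancellative, not group-embeddable, so no natural target group is available. The delicate point is therefore to calibrate the auxiliary alphabet $C$ and the special form relators encoding $u_i=v_i$ in (iii) just precisely enough that (a) the equalities $u_i=v_i$ are forced among the right units, while (b) no unintended collapses among $A$-words are introduced by the inverse-monoid congruence generated by $\mathfrak{R}$. Finding such a calibration, and rigorously justifying it through one of the routes sketched above, is the technical core of the proof.
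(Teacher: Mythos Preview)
Your plan is on the right track but overcomplicated, and the complication is what produces your doubt. The paper uses no auxiliary alphabet $C$ at all: it simply takes
\[
M_T=\Inv\pre{A}{aa^{-1}=1\;(a\in A),\ u_iv_i^{-1}=1\;(i\in I)}.
\]
Once every $a\in A$ is a right unit, each positive word $u_i,v_i$ is a right unit, so $u_iv_i^{-1}=1$ is already a legitimate special relation and is equivalent in $M_T$ to $u_i=v_i$ (via \cite[Corollary~3.2]{Gr-Inv}). There is nothing to ``calibrate''. The submonoid $T'$ of $M_T$ generated by $\{[a]_{M_T}:a\in A\}$ is then exactly the RU-monoid: every prefix of $aa^{-1}$ or of $u_iv_i^{-1}$ reduces, after cancelling $a_la_l^{-1}$ pairs, to a word in $A^*$.

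For injectivity you already name the correct device, and it is precisely what the paper does: map $M_T$ into the inverse hull $IH(T)$ via $[a]_{M_T}\mapsto\rho_a$. This is well defined because $\rho_a\rho_a^{-1}=\id_T$ and $\rho_{u_i}=\rho_{v_i}$ in $IH(T)$, so all defining relations of $M_T$ hold there. By \cite[Theorem~1.22]{CP} the right units of $IH(T)$ form a copy of $T$, so the restriction of this map to $T'$ lands in $T$ and is surjective. On the other hand $T'$ is right cancellative (being an RU-monoid) and satisfies $u_i=v_i$, so $T$ surjects onto $T'$ by the universal property of $\MonRC$. These two surjections compose to the identity on $T$, giving $T'\cong T$.

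So your inverse-hull route is exactly the paper's; the gap in your proposal is only the unnecessary detour through auxiliary letters $s_i,t_i$, which creates the very ``unintended collapse'' worry that the clean construction avoids.
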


\begin{proof}
Begin by defining a finitely presented special inverse monoid
$$
M_T = \Inv\pre{A}{aa^{-1}=1\; (a\in A),\ u_iv_i^{-1}=1\; (i\in I)}.
$$
We claim that its monoid of right units is isomorphic to $T$. Notice that the inverse monoid $M_T$ is also presented by $\Inv\pre{A}{aa^{-1}=1\; (a\in A),\ u_i=v_i\; (i\in I)}$: 
under the assumption that all the letters $a\in A$ represent right units, the equations $u_iv_i^{-1}=1$ and $u_i=v_i$ are equivalent, by using that fact that if a word 
$w\in\ol{A}^*$ and an inverse monoid $K$ is such that $[w]_K$ is a right unit of $K$ then $[w]_K=[\red(w)]_K$, see e.g.\ \cite[Corollary 3.2]{Gr-Inv}.

We proceed by mimicking the argument from \cite[Theorem 2.2]{IMM}. Namely, $T$ is a right cancellative monoid, so \cite[Theorem 1.22]{CP} applies: $T$ is isomorphic to the monoid
of right units of its \emph{inverse hull} $IH(T)$, the inverse monoid of partial bijections on $T$ generated by the right translations $\rho_t$, $t\in T$ (since $T$ is right cancellative,
each $\rho_t$ must be an injective transformation of $T$, with the left ideal $Tt$ as its image). Since for all $i\in I$, both $[u_i]_{IH(T)}$ and $[v_i]_{IH(T)}$ are right units of
the inverse monoid $IH(T)$, we have $[u_iv_i^{-1}]_{IH(T)}=1$ in this inverse monoid. Hence, there is a surjective inverse monoid homomorphism $\mu:M_T\to IH(T)$ extending the map
$[a]_{M_T}\mapsto \rho_a$, $a\in A$; also, there is a natural surjective involutory monoid homomorphism $\nu:\ol{A}^*\to M_T$. Write $T'=A^*\nu$, which is a submonoid of $M_T$
generated by $[a]_{M_T}$ for all $a\in A$. This is precisely the generating set for the monoid of its right units of $M_T$, so this monoid must be $T'$. However, $T'\mu$ is just
the monoid of right units of $IH(T)$ (the latter being isomorphic to $T$). But we have $[u_i]_{M_T}=[v_i]_{M_T}$ for all $i\in I$, and since $u_i,v_i\in A^*$ this implies
$[u_i]_{T'}=[v_i]_{T'}$. As $T'$ is right cancellative (being an RU-monoid), it is a homomorphic image of $T$, forcing $\mu\restriction_{T'}$ to be a monoid isomorphism. Thus $T$
is an RU-monoid.
\end{proof}

\begin{cor}
All \gem finitely presented monoids are RU-monoids.
\end{cor}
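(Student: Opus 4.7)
The plan is to deduce this corollary directly from Theorem~\ref{thm:rc-fin} by showing that every group-embeddable finitely presented monoid is in fact finitely RC-presented on the same presentation. Suppose $M = \Mon\pre{A}{u_i=v_i\; (i\in I)}$ with $A$ and $I$ finite, and with $M$ embeddable into a group. Since any submonoid of a group is (two-sided) cancellative, $M$ is itself right cancellative.

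Now recall how $\MonRC\pre{A}{u_i=v_i\; (i\in I)}$ is constructed: it is the quotient $A^*/\mathfrak{R}^{\mathrm{RC}}$, where $\mathfrak{R}^{\mathrm{RC}}$ is the intersection of all congruences on $A^*$ containing the relation set $\{(u_i,v_i):i\in I\}$ and whose quotient is right cancellative. The congruence $\theta$ defining $M$ (the smallest congruence on $A^*$ containing the relations) itself satisfies both conditions: it contains the relations, and $A^*/\theta \cong M$ is right cancellative. Hence $\mathfrak{R}^{\mathrm{RC}} \subseteq \theta$. The reverse inclusion is automatic since $\mathfrak{R}^{\mathrm{RC}}$ is a congruence containing the relations. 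So the two congruences coincide, and thus $M \cong \MonRC\pre{A}{u_i=v_i\; (i\in I)}$ is finitely RC-presented.

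Applying Theorem~\ref{thm:rc-fin} to $M$ then immediately gives that $M$ is an RU-monoid, completing the proof. I do not expect any real obstacle here; the whole point is that for group-embeddable (indeed any right cancellative) finitely presented monoids, the ordinary monoid presentation and the RC-presentation coincide, so the stronger Theorem~\ref{thm:rc-fin} subsumes this case with no additional work.
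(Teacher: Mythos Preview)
Your proof is correct and takes essentially the same approach as the paper: the paper notes just before Theorem~\ref{thm:rc-fin} that a finitely presented monoid $\Mon\pre{A}{\mathfrak{R}}$ which is right cancellative satisfies $\Mon\pre{A}{\mathfrak{R}}=\MonRC\pre{A}{\mathfrak{R}}$, and the corollary is then immediate since group-embeddable monoids are right cancellative. Your argument simply unpacks this remark in detail.
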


However, as the previous theorem shows, RU-monoids are not confined to \gem monoids; for example, the right units of 
$$
M=\Inv\pre{a,b,c}{aa^{-1}=bb^{-1}=cc^{-1}=1,\ abc^{-1}a^{-1}=1}
$$
is a right cancellative monoid that is not group-embeddable. Indeed, the proof above tells us that in fact $M=\Inv\pre{a,b,c}{aa^{-1}=bb^{-1}=cc^{-1}=1,\ ab=ac}$.
Now, in the monoid $R$ of right units of $M$ we clearly must have $[ab]_M=[ac]_M$, but certainly not $[b]_M=[c]_M$ (as, in fact, $R=\MonRC\pre{a,b,c}{ab=ac}$), 
which contradicts group-embeddability of $R$.

Since every unit of an inverse monoid is automatically a right unit, it follows that the group of units of the RU-monoid of an inverse monoid $M$ coincides with the group of units of 
of $M$; therefore, as mentioned in the introduction, \cite[Theorem 4.1]{GK} shows that the class of groups of units of all RU-monoids is precisely the class of recursively presented
groups. 

In the final result of this paper we describe the \Sch groups of RU-monoids. For this, we recall a construction from \cite[Section~6]{GR} and a couple of pertinent results from that paper that we are going to use in the following.
Let $A=\{a_1,\dots,a_n\}$ be a finite alphabet, and let $Q$ and $W$ be two subsets of $\ol{A}^*$ with $W=\{w_1,\dots,w_k\}$ finite and $Q=\{r_i:\ i\in I\}$ where $I$ contains a distinguished index 1.
Furthermore, let $t$ be a letter not in $\ol{A}$. Let $K_Q=\Gp\pre{A}{r_i=1\; (i\in I)}$. For any list of words $u_1,\dots,u_m\in\ol{A}^*$ we define
$$
e(u_1,u_2,\dots,u_m) = u_1u_1^{-1}u_2u_2^{-1}\dots u_mu_m^{-1}.
$$
Now we define the special inverse monoid
$$
M_{Q,W} = \Inv\pre{A,t}{fr_1=1,\; r_i=1\; (i\in I\setminus\{1\})}
$$
where
$$
f = e(a_1,\dots,a_n,tw_1t^{-1},\dots,tw_kt^{-1},a_1^{-1},\dots,a_n^{-1}).
$$
As noted in \cite{GR}, it was shown in \cite[Lemma 3.3]{Gr-Inv} that $M_{Q,W}$ is equal to the  
monoid defined by the presentation with generating set $A \cup \{t\}$ and defining relations  
\begin{align*}
& r_i=1 & (i\in I), \\
& aa^{-1}=a^{-1}a=1 & (a\in A), \\
& tw_jt^{-1}tw_j^{-1}t^{-1} = 1 & (1\leq j\leq k). 
\end{align*}
We are going to make use of the following results from \cite{GR}. 

\begin{pro}[{\cite[Theorem 6.3]{GR}}]\label{pro:gr-import}
With the above notation and definitions we have the following. 
\begin{itemize}
\item[(1)] Let $T_W$ be the submonoid of the group $K_Q$ generated by 
$\{[w_j]_{K_Q}:\ 1\leq j\leq k\}$. The submonoid of 
right units of $M_{Q,W}$ is isomorphic to the submonoid of the group $G_Q=K_Q * FG(t)$ generated by 
$$
\{[t]_{G_Q}\}\cup K_Q\cup [t]_{G_Q}T_W[t]_{G_Q}^{-1}.
$$
\item[(2)] The group of units of $M_{Q,W}$ is isomorphic to the subgroup $U$ of $G_Q$ generated by 
$$K_Q\cup \{[twt^{-1}]_{G_Q}:\ w\in W'\},$$
where $W'\subseteq W$ consists of all $w_j\in W$ such that the word $tw_jt^{-1}$ represents a unit of $M_{Q,W}$. 
Furthermore, $U$ is generated by its subgroups $K_Q$ and $[t]_{G_Q}H_{W'}[t]_{G_Q}^{-1}$ where $H_{W'}$ is the subgroup of $K_Q$ generated by $\{[w]_{K_Q}:\ w\in W'\}$, and is in fact isomorphic to the free product of these subgroups.
\end{itemize}
\end{pro}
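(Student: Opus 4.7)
The plan is to exploit the alternative presentation of $M_{Q,W}$ already displayed in the excerpt, in which every letter $a\in A$ is explicitly a two-sided unit and every $[tw_jt^{-1}]_{M_{Q,W}}$ is explicitly a right unit. Under this reformulation the maximal group image of $M_{Q,W}$ is precisely $G_Q=K_Q*FG(t)$, so the structural claims of the proposition should follow once we understand the canonical homomorphism from the RU-monoid $R$ of $M_{Q,W}$ onto its prefix monoid inside $G_Q$.

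For part (1), I would first invoke \cite[Proposition 4.2]{IMM} to see that $R$ is generated by the images $[p]_{M_{Q,W}}$ of prefixes $p$ of the defining relators. The prefixes of the $r_i$ together with those of $aa^{-1}$ and $a^{-1}a$ realise every element of $K_Q$ inside $R$, while the prefixes of $tw_jt^{-1}tw_j^{-1}t^{-1}$ contribute $[t]_{M_{Q,W}}$, elements $[tp]_{M_{Q,W}}$ for $p$ a prefix of $w_j$, and $[tw_jt^{-1}]_{M_{Q,W}}$. Mapping down to $G_Q$ and absorbing the elements $[tp]_{G_Q}$ into the already-present product $[t]_{G_Q}\cdot K_Q$, and noting that products of the $[tw_jt^{-1}]_{G_Q}$ sweep out $[t]_{G_Q}T_W[t]_{G_Q}^{-1}$, shows that the image of $R$ in $G_Q$ is generated exactly by $\{[t]_{G_Q}\}\cup K_Q\cup[t]_{G_Q}T_W[t]_{G_Q}^{-1}$. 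The crux, which I expect to be the main obstacle, is to show that the map $R\to G_Q$ is injective; the most natural route is to verify that $M_{Q,W}$ is $E$-unitary and then invoke the theorem of \cite{IMM} that identifies $R$ with the prefix monoid of $G_Q$. The idempotent $f$ in the original presentation is engineered precisely so that this $E$-unitarity holds.

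For part (2), once (1) is settled, the group of units of $M_{Q,W}$ coincides with the group of units of $R$, and can be computed inside $G_Q$ using only the Normal Form Theorem for the free product $G_Q=K_Q*FG(t)$. Elements of $K_Q$ are obviously two-sided units; $[t]_{G_Q}$ itself is not, because its inverse $[t]_{G_Q}^{-1}$ does not lie in the submonoid described in (1) (the normal form of any product of the listed generators begins with either a positive power of $t$ or with an element of $K_Q$). An element $[twt^{-1}]_{G_Q}\in[t]_{G_Q}T_W[t]_{G_Q}^{-1}$ is a unit iff its inverse $[tw^{-1}t^{-1}]_{G_Q}$ is again in $R$, which by the same normal-form argument is equivalent to $[w^{-1}]_{K_Q}\in T_W$; by definition this is exactly the condition $w\in W'$. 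Finally, the free-product decomposition $U\cong K_Q*[t]_{G_Q}H_{W'}[t]_{G_Q}^{-1}$ is a direct application of the Normal Form Theorem: any alternating product of non-trivial elements drawn from the two factors is already in reduced form inside $G_Q$, and hence non-trivial there.
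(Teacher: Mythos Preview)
The paper does not prove this proposition at all: immediately after stating it, the authors write that item (1) is precisely \cite[Theorem~6.3(iii)]{GR} and item (2) is extracted from the proof of part~(v) of the same theorem. So there is no argument in the present paper to compare your sketch against; the result is imported wholesale from \cite{GR}.

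That said, your outline has a genuine gap at exactly the point you yourself flag. You correctly compute the image of $R$ in $G_Q$ (the prefix monoid with respect to the alternative presentation) and correctly reduce everything to showing that the canonical map $R\to G_Q$ is injective. But the assertion that ``the idempotent $f$ \dots\ is engineered precisely so that this $E$-unitarity holds'' is not justified, and is not the role $f$ plays in \cite{Gr-Inv,GR}: the purpose of $f$ there is to force the listed words to become right units (via \cite[Lemma~3.3]{Gr-Inv}), not to guarantee $E$-unitarity of the resulting inverse monoid. Whether $M_{Q,W}$ is $E$-unitary for arbitrary $Q$ is not obvious, and in any case the proof in \cite{GR} does not proceed this way; it analyses the right units directly (via the structure of the Sch\"utzenberger graph of $1$ and explicit normal forms in $G_Q$) rather than invoking the $E$-unitary reduction from \cite{IMM}. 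So as it stands your argument for (1) is incomplete at its crucial step.

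For (2) there is a smaller but related issue: you recast $W'$ as $\{w\in W:\ [w^{-1}]_{K_Q}\in T_W\}$, whereas the proposition defines $W'$ in terms of $tw_jt^{-1}$ being a unit of $M_{Q,W}$. These coincide only once (1) is established and after a further normal-form argument showing that $[tw^{-1}t^{-1}]_{G_Q}$ lies in the submonoid described in (1) if and only if $[w^{-1}]_{K_Q}\in T_W$; you assert this but the ``only if'' direction needs the full force of the normal form theorem applied to an arbitrary product of the generators, not just a glance at first letters. The free-product decomposition of $U$ at the end is fine.
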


Item (1) of the previous proposition is precisely Theorem 6.3 (iii) from \cite{GR}, while item (2) 
may be found within the proof of part (v) of the same theorem.

\begin{thm}\label{thm:Sch-ru}
The \Sch groups of RU-monoids are exactly the recursively enumerable subgroups of finitely presented groups. 
\end{thm}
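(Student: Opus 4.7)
For the forward direction, every RU-monoid is right cancellative and recursively presented by Lemma~\ref{lem:ru-hig}, so Proposition~\ref{pro:general2}(3) immediately implies that each of its \Sch groups is a recursively enumerable subgroup of a finitely presented group.

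For the backward direction, the plan is to apply Theorem~\ref{thm:wHig} with $\cK$ taken to be the class of RU-monoids, in exact parallel with the proof of Theorem~\ref{thm:Sch-prefix}. The task reduces to verifying the hypothesis of Theorem~\ref{thm:wHig}: given a \gem recursively presented monoid $M$ with embedding $\alpha\colon M\to\Gamma$ into a finitely presented group, one must construct an RU-monoid $N_M$ fitting into the required commutative square of embeddings into some finitely presented group $\Gamma_1$, with the additional condition that $\beta\restriction_{U_M}$ be an isomorphism onto $U_{N_M}$. The natural candidate---replacing the ``$M*C^*$'' enlargement used in Theorem~\ref{thm:Sch-prefix}---is the submonoid of right units of the finitely presented special inverse monoid $M_{Q,W}$ furnished by Proposition~\ref{pro:gr-import}: one picks a finitely presented group $K_Q$ together with a finite list $W$ of words over $\ol{A}$ whose $K_Q$-classes generate the image of $M$ as a submonoid of $K_Q$, and takes $N_M$ to be the RU-monoid of $M_{Q,W}$. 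By Proposition~\ref{pro:gr-import}(1), $N_M$ realises as a submonoid of the finitely presented group $\Gamma_1:=K_Q*FG(t)$ containing $K_Q$; from this explicit realisation the embeddings $M\hookrightarrow N_M$, $N_M\hookrightarrow\Gamma_1$, and the group embedding $\Gamma\hookrightarrow\Gamma_1$ making the square commute all read off directly.

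The delicate point---and the principal obstacle---is the group-of-units condition. Proposition~\ref{pro:gr-import}(2) shows that $U_{N_M}\cong K_Q*tH_{W'}t^{-1}$, which contains all of $K_Q$; in the naive choice $K_Q=\Gamma$ this is strictly larger than $U_M$, and the required isomorphism $U_M\cong U_{N_M}$ then fails. The resolution is a careful calibration of $(Q,W)$: one wants $K_Q$ to coincide with $U_M$ so that the $K_Q$-factor contributes only the units already present in $M$, and one wants $W$ chosen so that no conjugate $twt^{-1}$ represents a unit of $M_{Q,W}$, forcing $W'=\emptyset$ and collapsing the free-product factor $tH_{W'}t^{-1}$. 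Since $U_M$ is only guaranteed to be finitely generated and recursively presented by Proposition~\ref{pro:general2}(1), achieving $K_Q=U_M$ in general calls for an auxiliary application of the Higman Embedding Theorem, first enlarging $M$ so that its group of units becomes finitely presented and can thus legitimately serve as $K_Q$. Once this calibration has been carried out for each relevant $M$, the hypothesis of Theorem~\ref{thm:wHig} is satisfied and the theorem immediately delivers the backward direction: every recursively enumerable subgroup of a finitely presented group arises as a \Sch group of an RU-monoid.
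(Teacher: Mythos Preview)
Your forward direction is correct and matches the paper.

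The backward direction has a genuine gap: your two paragraphs place incompatible demands on $K_Q$. In the first you realise $M$ as the submonoid $T_W$ of $K_Q$; in the second you require $K_Q=U_M$. But by Proposition~\ref{pro:gr-import}(2) the group of units of the RU-monoid of $M_{Q,W}$ always contains $K_Q$ as a free factor, so the condition $\beta(U_M)=U_{N_M}$ forces $K_Q$ to embed in $U_M$, and then $T_W\subseteq K_Q$ is too small to receive a copy of $M$ unless $M$ is already a group. The proposed ``auxiliary Higman embedding'' does not help, because the obstruction is not finite presentability of $U_M$ but the size mismatch between $M$ and $U_M$. Concretely, take $M$ to be the free commutative monoid of rank two, whose group of units is trivial: if $K_Q=U_M$ then $K_Q$ and hence $T_W$ are trivial and $N_M\cong\{t^n:n\ge 0\}$, into which $M$ does not embed; while any nontrivial $K_Q$ already gives $U_{N_M}\not\cong U_M$. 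So the hypothesis of Theorem~\ref{thm:wHig} cannot be verified via the $M_{Q,W}$ construction, and it is not clear that it can be verified at all for the class of RU-monoids.

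The paper accordingly does not invoke Theorem~\ref{thm:wHig} here but unwinds its proof directly inside an RU-monoid. Given $K$ recursively enumerable in a finitely presented $H_1=\Gp\pre{B}{\mathfrak R_1}$, one forms the HNN extension $G_0$ of $H_1$ with stable letter $z$ fixing $K_1\cong K$, Higman-embeds $G_0$ into a finitely presented $G=\Gp\pre{A}{\mathfrak R}$, and takes the RU-monoid $R$ of $M_{\mathfrak R,W}$ with $W=B\cup B^{-1}\cup\{z\}$. Now $R$ sits inside $P=G*FG(t)$ with group of units $U\cong G*tHt^{-1}$, which is deliberately left large. One checks via Britton's Lemma that $[tzt^{-1}]_P$ is not a unit of $R$, and then applies Proposition~\ref{pro:general}(4) to its $\H$-class: the \Sch group is $U\cap [tz^{-1}t^{-1}]_P\,U\,[tzt^{-1}]_P$, and a normal-form computation in the free product $P$ collapses this intersection to $H\cap z^{-1}Hz\cong K$. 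The moral is the opposite of your strategy: let the group of units be large and compute the conjugate-intersection explicitly, rather than trying to pin down $U_{N_M}$ in advance.
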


\begin{proof}
($\Ra$) 
By Lemma \ref{lem:ru-hig}, if $M$ is an RU-monoid then it is right cancellative and recursively presented (as a monoid). Hence by Proposition \ref{pro:general2} (3) 
every \Sch group of $M$ is a recursively enumerable subgroup of a finitely presented group.

($\La$)
The initial setup for our argument is the same as in the beginning of the proof of Theorem \ref{thm:wHig}. Namely, let $K$ be an arbitrary group that arises as a recursively enumerable 
subgroup of a finitely presented group. Let $H_1=\Gp\pre{B}{\mathfrak{R}_1}$ be a finitely presented group and $L$ a r.e.\ language such that $K_1=\{[w]_{H_1}:\ w\in L\}$ is a subgroup of 
$H_1$ isomorphic to $K$. We let $G_0$ to be the HNN extension of $H_1$, this time with $z$ as the stable letter (the letter $t$ will be reserved for a different purpose in this proof), 
conjugating each element of $K_1$ to itself. As seen in Theorem \ref{thm:wHig}, $G_0$ is a recursively presented group, so it embeds into a finitely presented group $G=\Gp\pre{A}{\mathfrak{R}}$.
Here we may assume that $B\subseteq A$ and $z\in A\setminus B$, so that the subgroup of $G$ generated by $\{[b]_G:\ b\in B\}$ is equal to the embedded copy $H$ of $H_1$ in $G$. 

Bearing in mind the notation introduced just before the statement of this theorem, consider the special inverse monoid $M=M_{\mathfrak{R},W}$ where $W=B\cup B^{-1}\cup\{z\}$ (so all words from
$W$ here have length 1). By Proposition \ref{pro:gr-import} (1) we have that the monoid of right units of $M$ is isomorphic to the submonoid $R$ of the free product $P=G*FG(t)$ generated by 
$$\{[t]_P\}\cup\{[a]_P,[a]_P^{-1}:\ a\in A\}\cup [t]_P T_W [t]_P^{-1},$$
where $T_W$ is the submonoid of $G$ generated by $\{[z]_G\}\cup \{[b]_G,[b]_G^{-1}:\ b\in B\}$ (in other words, by $[z]_G$ and $H$).

Furthermore, item (2) of Proposition \ref{pro:gr-import} implies that the group of units of $M$ is isomorphic to the subgroup $U$ of $P$ generated by 
$$\{[a]_P:\ a\in A\}\cup \{[tbt^{-1}]_P:\ b\in B \},$$
with $U$ being the free product of $G$ and the subgroup $[t]_P H [t]_P^{-1}$ of $P$. This is a consequence of the fact that the word $tbt^{-1}$ represents 
a unit of $M$ for all $b\in B$, while $tzt^{-1}$ does not (even though it does represent a right unit), as we shall now justify. Indeed, 
from the displayed equations immediately before the statement of Proposition~\ref{pro:gr-import},  
the fact that $B\cup B^{-1}\subseteq W$ implies that
$[(tbt^{-1})(tb^{-1}t^{-1})]_M = [(tb^{-1}t^{-1})(tbt^{-1})]_M = 1$ holds, showing $[tbt^{-1}]_M$ and also $[tb^{-1}t^{-1}]_M$ is a unit of $M$. 
On the other hand, seeking a contradiction assume that  
$tzt^{-1}$ represents a unit of $M$. Then $[tzt^{-1}]_P$ is a unit of $R$, implying that $[tz^{-1}t^{-1}]_P\in R$. Bearing in mind the generating set of $R$ provided above, 
it follows that there are words $w_0,w_1,\dots,w_n\in\ol{A}^*$ for some $n\geq 2$ and $\varepsilon_i\in\{1,-1\}$, $1\leq i\leq n$, such that
$$
[tz^{-1}t^{-1}]_P = [w_0 t^{\varepsilon_1} w_1 t^{\varepsilon_2} w_2 \dots w_{m-1}t^{\varepsilon_m} w_m]_P
$$
holds, where no two consecutive $\varepsilon_i,\varepsilon_{i+1}$ are equal to $-1$, 
we have $[w_i]_G\neq 1$ whenever $\varepsilon_i\neq\varepsilon_{i+1}$, 
and we have $w_i\in W^*$ whenever $\varepsilon_i=1$ and $\varepsilon_{i+1}=-1$.
By the Normal Form Theorem for free products applied to $P$, this is possible only if 
$n=2$, $\varepsilon_1=1$, $\varepsilon_2=-1$, $[w_0]_G=[w_2]_G=1$ and  
$$[z^{-1}]_G = [w_1]_G,$$
which from the conditions above implies $w_1 \in W^*$. 
However, both elements of $G$ involved in the latter equality actually belong the subgroup $G_1$ of $G$ generated by $[z]_G$ and $H$, an isomorphic copy of $G_0$. 
Therefore we arrive at $[z^{-1}]_{G_0} = [w_1]_{G_0}$, but this is impossible by Britton's Lemma (see also \cite[Theorem IV.2.1]{LSch}) applied to the HNN extension $G_0$,
as the word $w_1$ may contain only occurrences of the letter $z$ but not of its inverse $z^{-1}$. This is a contradiction, whence $[tzt^{-1}]_M$ is not a unit of $M$.

Now consider the \Sch group of the $\H$-class of the element $[tzt^{-1}]_P$ in the (group-embeddable) monoid $R$, an isomorphic copy of the RU-monoid of $M$. 
By Proposition \ref{pro:general} (4), this group is isomorphic to
$$
U \cap [tz^{-1}t^{-1}]_P U [tzt^{-1}]_P.
$$
By using the Normal Form Theorem for free products again, we conclude that a typical word representing an element from $U$ is of the form 
$$
u_1 t v_1 t^{-1} u_2 t v_2 t^{-1} \dots u_m t v_m t^{-1} u_{m+1}
$$
where the words $u_1,\dots,u_{m+1}\in\ol{A}^*$, $v_1\dots,v_m\in\ol{B}^*$ are non-empty except possibly $u_1$ and $u_m$. Hence, the elements of $[tz^{-1}t^{-1}]_P U [tzt^{-1}]_P$ are
represented by words of the form 
$$
t z^{-1} t^{-1} u_1 t v_1 t^{-1} u_2 t v_2 t^{-1} \dots u_m t v_m t^{-1} u_{m+1} t z t^{-1},
$$
subject to the same conditions as above (note that if, for example, $u_1$ is empty then the prefix $t z^{-1} t^{-1} u_1 t v_1 t^{-1}$ of the previous word reduces to $t z^{-1} v_1 t^{-1}$).
So, upon employing the Normal Form Theorem for the third time, we conclude that an element of $U$ also belongs to the conjugate subgroup $[tz^{-1}t^{-1}]_P U [tzt^{-1}]_P$ of $P$ if and only if
$u_1,u_{m+1}$ are empty and there exist words $u_2,u_2',\dots,u_m,u_m'\in\ol{A}^*$ and $v_1,v_1',\dots,v_m,v_m'\in\ol{B}^*$ such that 
$$
[t v_1 t^{-1} u_2 t v_2 t^{-1} \dots u_m t v_m t^{-1}]_P = [t z^{-1} v_1' t^{-1} u_2' t v_2' t^{-1} \dots u_m' t v_m' z t^{-1}]_P.
$$
If $m\geq 2$, the latter condition is equivalent to the equalities $[u_i]_G=[u_i']_G$ for all $2\leq i\leq m$, $[v_i]_G=[v_i']_G$ for all $2\leq i\leq m-1$, $[v_1]_G = [z^{-1}v_1']_G$ and $[v_m]_G=[v_m'z]_G$
holding simultaneously. This implies $[z]_G = [v_1'v_1^{-1}]_G = [(v_m')^{-1}v_m]_G \in H$, a contradiction. Hence, $m=1$. In such a case, the previous condition reduces to 
$$
[t v_1 t^{-1}]_P = [t z^{-1} v_1' z t^{-1}]_P,
$$
which, on the other hand, holds for any words $v_1,v_1'\in\ol{B}^*$ such that $[v_1]_G=[z^{-1}v_1'z]_G$. From this we immediately conclude that 
$$
U \cap [tz^{-1}t^{-1}]_P U [tzt^{-1}]_P \cong H \cap [z^{-1}]_G H [z]_G \cong K_1.
$$
Thus we have that the considered \Sch group of $M$ is isomorphic to $K$, completing the proof.
\end{proof}


\small
\begin{ackn}
The research of the first named author is supported by the Personal Grant F-121 ``Problems of combinatorial semigroup and group theory'' 
of the Serbian Academy of Sciences and Arts. 
The research of the second named author was supported by the EPSRC Fellowship Grant 
EP/V032003/1 ``Algorithmic, topological and geometric aspects of infinite groups, monoids and inverse semigroups".
\end{ackn}
\normalsize


\end{document}